\newcommand{\ZZ}{{\mathbb{Z}}}
\newcommand{\bX}{{\mathbb{X}}}
\newcommand{\mcalB}{{\mathcal{B}}}
\newcommand{\eqdef}{\stackrel{\scriptscriptstyle\rm def}{=}}
\definecolor{Red}{cmyk}{0,1,1,0}
\definecolor{verde}{cmyk}{1,0,1,0}
\definecolor{azul}{cmyk}{1,1,0,0}
\begin{document}

\title[Non-hyperbolic Iterated Function Systems]{Non-hyperbolic Iterated Function Systems:
 semifractals and the chaos game}

\author[L. J.~D\'\i az]{Lorenzo J. D\'\i az}
\address{Departamento de Matem\' atica PUC-Rio, Marqu\^es de S\~ao Vicente 225, G\'avea, Rio de Janeiro 22451-900, Brazil}
\email{lodiaz@mat.puc-rio.br}

\author[E. Matias]{Edgar Matias}
\address{Departamento de Matem\' atica, ICMC-USP S\~ ao Carlos-SP, Brasil}
\email{edgarmatias9271@gmail.com}

\begin{abstract}
We consider iterated functions systems (IFS) on compact metric spaces and introduce the concept of  
 \emph{target sets}. Such sets have very rich dynamical properties and  play  a similar role as semifractals 
 introduced by Lasota and Myjak do for regular IFSs.  
 %We introduce semifractals associated to not necessarily regular IFSs whose target set is nonempty.
  We study 
 sufficient conditions which guarantee that the closure of the target set is a local attractor for the IFS. As a corollary, 
 we establish necessary and sufficient conditions for the IFS having a global attractor. 
  We give an example of a non-regular IFS whose target set is nonempty,  showing that our approach gives rise to a ``new class'' of semifractals.
 Finally, 
 we show that random orbits generated by IFSs draws target sets that are ``stable''.
\end{abstract}

\begin{thanks}{This paper is part of the PhD thesis of EM (PUC-Rio) supported by CAPES.
LJD is partially supported by CNPq and CNE-Faperj.
The authors warmly thank P. Barrientos and K. Gelfert for their useful comments on this paper.  
}\end{thanks}
\keywords{Barnsley-Hutchinson operator, chaos game,
Conley and strict attractors, iterated function system, semifractals}
\subjclass[2000]{37C70,
%	Attractors and repellers, topological structure
 28A80,
 %fractals. 
 47H10.
 %Fixed-point theorems
 }

\date{}
\newtheorem{mteo}{Theorem}
\newtheorem{mcoro}{Corollary}
\newtheorem{mprop}{Proposition}

\newtheorem{teo}{Theorem}[section]
\newtheorem{prop}[teo]{Proposition}
\newtheorem{lema}[teo]{Lemma}
\newtheorem{schol}[teo]{Scholium}
\newtheorem{coro}[teo]{Corollary}
\newtheorem{defi}[teo]{Definition}
\newtheorem{ques}[teo]{Question}
\newtheorem{example}[teo]{Example}
\newtheorem{remark}[teo]{Remark}
\newtheorem{nota}[teo]{Notation}
\newtheorem{claim}[teo]{Claim}
\newtheorem{fact}[teo]{Fact}

\numberwithin{equation}{section}

\maketitle

%\tableofcontents

%\input{revised-reminder}
\section{Introduction}

%%%%%%%%%%%%%%%%%%%%%%%%%%%%%%%%%
%%%%%%%%%%%%%%%%%%%%%%%%%%%%%%%%%

Consider 
finitely many contractions $f_{1},\dots, f_{k}$ defined on a complete metric space $X$ and 
a sequence  of 
independent and identically distributed
(i.i.d.) random variables $\bX_{n}$ 
with range $\{1,\dots, k\}$. It is well known 
that for every $x$, with probability $1$, the {\emph{tail sequence of sets}} 
$\{f_{\bX_{n-1}}\circ \dots \circ f_{\bX_{0}}(x)\colon n\geq \ell\}_{\ell}$
converges to the unique compact set $K$
satisfying 
$$
K=\bigcup_{i=1}^{k} f_{i}(K),
$$
called the {\emph{Hutchinson attractor}}. The existence and uniqueness of the latter was proved in \cite{Hu}. 

In this paper, we consider a setting where the maps 
$f_{1},\dots, f_{k}$ are just continuous maps defined on a compact metric space
and we translate several results of the so-called
``Hutchinson theory'',  as the ones described above, to this
non-contracting setting.
The finite family of continuous maps $f_{1},\dots, f_{k}$ is called an \emph{iterated function system} (in what follows, an IFS) and
denoted by $\mathrm{IFS}(f_{1},\dots,f_{k})$. An IFS is called \emph{hyperbolic} if all of its maps are (uniform) contractions, 
and \emph{non-hyperbolic} otherwise.

A key ingredient in this study is the
so-called
 {\emph{Barnsley-Hutchinson operator}} (in the sequel, the $\mathrm{BH}$-operator for short). Considering  the 
 hyperspace  $\mathscr{H}(X)$ 
 of non-empty compact subsets  of $X$ 
 endowed with the Hausdorff distance, the $\mathrm{BH}$-operator  associated to  $\mathrm{IFS}(f_{1},\dots,f_{k})$ is defined
 by
\begin{equation}\label{e.BH}
\mathcal{B} \colon \mathscr{H}(X) \to \mathscr{H}(X), \quad
\mathcal{B} (A) \eqdef  \bigcup_{i=1}^{k} f_{i}(A).
\end{equation}
 An important aspect of the Hutchinson theory concerns 
 the study of the dynamics
of the $\mathrm{BH}$-operator and its fixed points.
 The {\emph{Hutchinson attractor}} of a hyperbolic IFS is the unique fixed point of the $\mathrm{BH}$-operator. Moreover, in this case, this attractor 
 is global, see \cite{Hu, fractalsevery}.% Observe that, in the non-hyperbolic case, the BH operator may have infinitely many
%fixed points, local attractors, or it may not have any ``kind''  of attractor, see Sections \ref{strictconley} and \ref{thechaosgamee} for different notions of attractors.

A first important class of non-hyperbolic IFSs considered in this study is the one
of \emph{contracting on average} IFSs, see \cite{BarElton}.
% This class was introduced with the aim of generalize result of the hyperbolic case when studying IFS in the probabilistic 
%point of view.
%the main result 
% This class was first considered in the study of IFSs in the probabilistic point of view, see \cite{BarElton}.
  For this class, when studying IFSs from the probabilistic point of view, 
  the results in \cite{BarElton} fully recovered the results about stationary measures
  in  \cite{Hu}.
 On the other hand, from the topological 
   point of view, the scenario may change substantially.
The dynamics of the $\mathrm{BH}$-operator of a contracting on average IFS is, in general, very different from the one in the hyperbolic case. 
For instance, concerning compact fixed points, it may have several ones or none at all. 
Nevertheless, by \cite{Lasota}, it always possesses one  (non-necessarily compact) fixed point
%\textcolor{green}{However, in \cite{Lasota} it is proved that a contracting on average IFS
%possesses a fixed point \textcolor{blue}{(non-necessarily compact)} 
with  very rich dynamical properties which is called its \emph{semifractal} and,
in a certain sense, generalizes 
the Hutchinson attractor.  We observe that semifractals 
were introduced for so-called {\emph{regular}} IFSs (see Section \ref{regularifs} for the precise definition), which is a class of IFSs much more general than the contracting on average one.
 A semifractal is characterised by the following two properties:
 \begin{enumerate}
 \item[{\bf (SF1)}] it is the {\emph{minimum fixed}} point  of the 
 $\mathrm{BH}$-operator (i.e., it is contained in any other fixed point of the $\mathrm{BH}$-operator) 
  and 
 \item[{\bf (SF2)}] it attracts each of its compact subsets under the action of the 
  $\mathrm{BH}$-operator. 
  \end{enumerate}
 %Note that, in general, a BH operator may  have no minimum fixed points. 

Another class of non-hyperbolic IFSs was considered by Edalat in
  \cite{Edalat}, also fully recovering aspects of the Hutchinson theory. In what follows, we assume that $X$ is compact.
The $\mathrm{IFS}(f_{1},\dots,f_{k})$ is called \emph{weakly hyperbolic}
if it satisfies the following topological condition:
\begin{equation}\label{e.edalat}
\lim_{n\to \infty}\mathrm{diam}\,\big( f_{\omega_{0}}\circ\cdots\circ f_{\omega_{n}}(X)\big)= 0
\end{equation}
 for every $ \omega=\omega_0\omega_1\omega_2\dots \in\Sigma_k\eqdef\{1,\dots,k\}^{\mathbb{N}}$.
In \cite{Edalat} it is proved that the
$\mathrm{BH}$-operator
of a weakly hyperbolic  IFS 
  has a global attracting fixed point. Note that every hyperbolic IFS is weakly hyperbolic.
Observe that by a recent result in \cite{Nowak}, after a change of metric, every weakly hyperbolic system can be made hyperbolic. 

Bearing the discussion above in mind, we introduce 
 the subset $S_{\mathrm{wh}}\subset\Sigma_k$ 
 of {\emph{weakly hyperbolic sequences}}
 defined by
\begin{equation}\label{e.stweakly}
S_{\mathrm{wh}}\eqdef\big\{\omega\in \Sigma_k\colon 
 \displaystyle\lim_{n\to \infty}\mathrm{diam}\big(f_{\omega_{0}}\circ\cdots\circ f_{\omega_{n}}(X)\big)=0\big\}.
\end{equation}
Note that if the IFS is weakly hyperbolic then $S_{\mathrm{wh}}=\Sigma_k$. 
In this paper, we will recover parts of the Hutchinson theory in  ``truly'' non-hyperbolic settings where we only require 
that the set $S_\mathrm{wh}$ is non-empty. In this analysis, a key object is the so-called 
\emph{target set} defined as follows.
Associated to the set $S_{\mathrm{wh}}$  we consider
the \emph{coding map} $\pi\colon S_{\mathrm{wh}}\to X$ that projects $S_{\mathrm{wh}}$ into the phase space $X$
defined by
\begin{equation}\label{e.pi}
\pi\colon S_\mathrm{wh} \to X ,
\quad
\pi(\omega)\eqdef\lim_{n\to \infty}f_{\omega_{0}}\circ\cdots\circ f_{\omega_{n}}(p),
\end{equation}
where $p$ is any point of $X$.
By definition of the set $S_{\mathrm{wh}}$, this limit always exists 
and is independent of $p\in X$.  The set of weakly hyperbolic sequences generates the {\emph{target set}} defined by
\begin{equation}
\label{e.targetlzo}
A_{\mathrm{tar}}\eqdef 
 \pi(S_{\mathrm{wh}}).
 \end{equation}
 Our results show that the target set encodes crucial aspects of the dynamics of the 
 $\mathrm{BH}$-operator. Let us now briefly discuss these results, the precise statements can be found in
 Section~\ref{s.mainresults}.

 In what follows we always assume that $S_{\mathrm{wh}}\ne \emptyset$.
 Under this assumption,
 we prove that the  closure of the target set satisfies the two characteristic properties
 {\bf{(SF1)}} and  {\bf{(SF2)}} of semifractals, see Theorem~\ref{semifractal}. We also give an example of a non-regular IFS whose target set is non-empty.   This shows that our approach gives rise to a ``new class'' of semifractals.

Let us observe that, in general, the closure of the target set may fail to be a global attractor. 
In Theorem~\ref{mt.local}, we study necessary and sufficient conditions guaranteeing that
 the closure of the  target set 
  is a ``local'' attractor. As a consequence,  
we state a sufficient and necessary  condition assuring that the 
$\mathrm{BH}$-operator  has a global attractor, generalising 
Edalat's result \cite{Edalat} mentioned above.   
Moreover, we show that if the $\mathrm{BH}$-operator has a unique fixed point, then it is necessarily a global attractor, 
see Theorem~\ref{mt.att}.

Finally, we investigate under which conditions {\emph{a random orbit generated by an IFS draws the target.}}
For a given $\mathrm{IFS}(f_{1},\dots,f_{k})$, consider a sequence 
of i.i.d. random variables $\bX_{n}$ with range in $\{1,\dots, k\}$ and for $i\in \{1,\dots, k\}$  let 
$
p_{i}\eqdef \mbox{prob}(\bX_{n}=i)>0$.
The procedure of choosing at random $i$ with probability $p_{i}$ and 
moving from $x$ to $f_{i}(x)$ is an algorithm called the  \emph{(probabilistic) chaos game}.
In this way,
 for each $x\in M$, we get a Markov chain 
 $\ZZ_{n}\eqdef f_{\bX_{n-1}}\circ\dots\circ f_{\bX_{0}}(x)$ 
  moving through the set $X$.
For hyperbolic IFSs, a consequence of standard ergodic theorems (see for instance \cite{Breiman})  is that the tail of the Markov chain $\{\ZZ_{n}\colon n\geq \ell\}_\ell$
 converges in the Hausdorff distance to the 
Hutchinson attractor with probability one.

An standard way of taking an i.i.d. sequence with range in $\{1,\dots, k\}$ is the following. Consider the space 
$\Sigma_{k}$ as a probability space endowed with the product $\sigma$-algebra and the product measure 
$\mathbb{P}=\nu^{\mathbb{N}}$, where $\nu$ is the probability measure in $\{1,\dots,k\}$ given
by $\nu (i)=p_i$.
Then the natural projections
$\bX_{n}(\omega)=\omega_{n}$ is a sequence of i.i.d. random variables. An advantage of considering $\Sigma_{k}$ as the sample space of the random variables $\bX_{n}$ is that we can use the structure of $\Sigma_{k}$ to improve the understanding and results
about the chaos game, and also to get more elementary proofs.
 The results in \cite{Lesniak} and Theorem~\ref{mt.jogodocaos} illustrate well these assertions. Let us explain this a bit more precisely.

% Therefore, as described above, 
%in the hyperbolic case, for every $x$ there is a set $\Lambda^{x}$ with $\mathbb{P}(\Lambda^{x})=1$ such that for every $\omega\in
%\Lambda^{x}$
%$$
%\lim_{\ell\to \infty} \{f_{\omega}^{n}(x)\colon n\geq \ell\}=K, 
%$$
%where $K$ is the Hutchinson attractor. The advantage of considering $\Sigma_{k}$ as the sample space of the 
%random variables $X_{n}$, is that we can use the structure $\Sigma_{k}$ to describe precisely the set $\Lambda^{x}$, and moreover, it is possible to show that it does not 
%depend on $x$.

We show in Theorem~\ref{mt.jogodocaos} that if the closure of the target set is 
\emph{stable} (see Section \ref{thechaosgamee} 
for the definition) then the subset $\mathcal{D}$ 
of \emph{disjunctive sequences} of $\Sigma_k$ (see Section~\ref{thechaosgamee} for details)
 is 
such that for every $x\in X$ 
the sequence of tail sets satisfies
$$
\lim_{\ell\to \infty}
\{f_{\omega}^{n}(x)\colon n\geq \ell\}_\ell =
\mathrm{closure}(A_{\mathrm{tar}}),
\quad\text{where}\quad
f_{\omega}^{n}(x)\eqdef f_{\omega_{n-1}}\circ \dots\circ f_{\omega_{0}}(x),
$$
for every $\omega\in \mathcal{D}$.
We observe that it follows from its definition that the set $\mathcal{D}$ does not depend on $x$ and
satisfies 
$\mathbb{P}(\mathcal{D})=1$.

The procedure of choosing $\omega$ in a
prescribed set and considering the deterministic orbit
$f_{\omega}^{n}(x)$ is called the \emph{deterministic chaos game}. 
Notice that in this study there is no stationary process associated,  hence it is not possible 
to use ergodic theorems (as in the classical theory) or  Markov operators (as in \cite{Myjak,Szarek}) to prove that deterministic orbits draw the attractor.
It is worth mentioning that, remarkably, even in the probabilistic chaos game it is possible to avoid such probabilistic techniques, see for instance \cite{Vincechaos, Rypka, Barrientos}.
Here Theorem~\ref{mt.jogodocaos} contributes to a series of  works about the deterministic chaos game, see \cite{Lesniak, Lesniaknon, Barrientos},
the novelties of our result 
are discussed in Section \ref{thechaosgamee}.

\subsection*{Organisation of the paper} 
In Section \ref{s.mainresults} we state precisely the main results in
this paper. The theorems are presented in  Section \ref{s.mainresults} and proved
in Section \ref{s.proofs}, which consists of a section of preliminary general results and  four subsections
  dedicated to prove these theorems.
  % in the order 
%in which they are stablished in Section \ref{s.mainresults}. 
In Section 
\ref{s.examples}, we present some examples illustrating our results and hypotheses.  
 
\medskip\noindent\textbf{Standing hypotheses.}
Throughout this paper, we always consider 
a complete metric space $(X,d)$, an $\mathrm{IFS}(f_1,\dots,f_k)$ of continuous maps defined on $X$, 
 and its corresponding $\mathrm{BH}$-operator $\mathcal{B}$. 
 Thus, these hypotheses are omitted in the statement of our theorems.

\medskip
Convergence of compact sets is always considered with respect to
the {\emph{Hausdorff distance} $d_H$ defined as follows.
Given a point $x\in X$ and a set $A\subset X$,  the distance between $x$ and $A$
 is defined
  by 
 $$
 d(x,A)\eqdef \inf\{d(x,a)\colon a\in A\}.
 $$
 Recall that 
 $\mathscr{H}(X)$ is the set of
 non-empty compact subsets  of $X$. 
 The {\emph{Hausdorff distance}} between two compact sets $A,B\in\mathscr{H}(X)$ is defined by 
\begin{equation}
\label{e.HD}
 d_{H}(A,B)\eqdef \max\{h_{s}(A,B),h_{s}(B,A)\},
 \quad
 \mbox{where} 
\quad
 h_{s}(A,B)\eqdef \sup_{a\in A}d(a,B).
 \end{equation}
 We observe that  $(\mathscr{H}(X),d_{H})$ is a compact metric space, see \cite{fractalsevery}.
 
 \section{Main results}
\label{s.mainresults}

\subsection{Target sets and semifractals}\label{regularifs}
Our first result shows that the closure of the target set satisfies the properties 
{\bf (SF1)} and {\bf (SF2)}
characterising the semifractals of regular IFSs introduced by Lasota  and  Myjak in \cite{Lasota}.
An $\mathrm{IFS}(f_{1},\dots, f_{k})=\mathfrak{F}$ 
 is said to be \emph{regular} if there
 are numbers $1\le i_{1}<i_{2}<\dots< i_{\ell} \leq k$ such that 
 $\mathrm{IFS}(f_{i_{1}},\dots,f_{i_{\ell}})=\mathfrak{F}'$ has a global attractor. The global attractor of $\mathfrak{F}'$ is called a 
{\emph{nucleus}} of $\mathfrak{F}$ (an IFS may have several
nuclei).
It is proved in \cite{Lasota} that the $\mathrm{BH}$-operator of any regular IFS $\mathfrak{F}$ 
has a fixed point,
called  a \emph{semifractal}, satisfying the properties
{\bf (SF1)} and {\bf (SF2)}.
%
%It is a minimum fixed point and 
%attracts every compact set inside it under the action of the $\mathrm{BH}$-operator of $\mathfrak{F}$. 
In the next theorem, replacing the regularity of the IFS by the condition $S_{\mathrm{wh}}\neq \emptyset$, we
 recover these properties. Recall the definition of the target set 
$A_{\mathrm{tar}}$ in
 \eqref{e.targetlzo}.

\begin{mteo}\label{semifractal}
Consider an IFS with $S_{\mathrm{wh}}\neq \emptyset$.
Then the following holds:
\begin{itemize}
\item [(1)]
The set $\overline{A_{\mathrm{tar}}}$ is the minimum fixed point of 
$\mathcal{B}$.

\item [(2)]
For every compact set $K\subset \overline{A_{\mathrm{tar}}}$ it holds 
$$
\lim_{n\to \infty} d_{H}(\mathcal{B}^{n}(K), \overline{A_{\mathrm{tar}}})=0.
$$
\end{itemize}
\end{mteo}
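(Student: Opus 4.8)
The plan is to lift everything to the coding space and exploit two elementary facts about $S_{\mathrm{wh}}$ and $\pi$. Writing $\sigma$ for the shift, one first checks that $S_{\mathrm{wh}}$ is backward invariant, $\sigma^{-1}(S_{\mathrm{wh}})\subseteq S_{\mathrm{wh}}$: if $\sigma\omega\in S_{\mathrm{wh}}$ then uniform continuity of $f_{\omega_0}$ on the compact space $X$ forces $\mathrm{diam}(f_{\omega_0}\circ\cdots\circ f_{\omega_n}(X))\to 0$, so $\omega\in S_{\mathrm{wh}}$. Second, for $\omega\in S_{\mathrm{wh}}$ and any symbol $i$ one has the commutation relation $\pi(i\omega)=f_i(\pi(\omega))$, obtained by pulling $f_i$ out of the limit defining $\pi$. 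Together these give $f_i(A_{\mathrm{tar}})=\pi(\{i\omega:\omega\in S_{\mathrm{wh}}\})\subseteq A_{\mathrm{tar}}$, whence $\mathcal{B}(A_{\mathrm{tar}})\subseteq A_{\mathrm{tar}}$; applying $\mathcal{B}$ to the closure and using $\mathcal{B}(\overline{B})=\overline{\mathcal{B}(B)}$ (valid since the $f_i$ are continuous and $X$ is compact) yields $\mathcal{B}(\overline{A_{\mathrm{tar}}})\subseteq\overline{A_{\mathrm{tar}}}$.

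For the reverse inclusion $\overline{A_{\mathrm{tar}}}\subseteq\mathcal{B}(\overline{A_{\mathrm{tar}}})$ the naive identity $\pi(\omega)=f_{\omega_0}(\pi(\sigma\omega))$ is unavailable, because $S_{\mathrm{wh}}$ is only backward invariant and $\sigma\omega$ need not lie in $S_{\mathrm{wh}}$. I would get around this by fixing one sequence $\eta\in S_{\mathrm{wh}}$ (possible since $S_{\mathrm{wh}}\ne\emptyset$) and setting $w_m:=\pi(\omega_1\cdots\omega_m\eta)\in A_{\mathrm{tar}}$, which is legitimate because the tail $\eta$ makes each truncation weakly hyperbolic. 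Since $\pi(\omega)=\lim_m f_{\omega_0}(w_m)$ and $X$ is compact, a subsequence $w_{m_j}\to w_\ast\in\overline{A_{\mathrm{tar}}}$ gives $\pi(\omega)=f_{\omega_0}(w_\ast)\in\mathcal{B}(\overline{A_{\mathrm{tar}}})$ by continuity; as $\mathcal{B}(\overline{A_{\mathrm{tar}}})$ is compact, taking closures proves $\overline{A_{\mathrm{tar}}}$ is a fixed point. Minimality is easier: if $C\in\mathscr{H}(X)$ satisfies $\mathcal{B}(C)=C$, then $\mathcal{B}^{n}(C)=C$, so for any $c\in C$ and $\omega\in S_{\mathrm{wh}}$ the points $f_{\omega_0}\circ\cdots\circ f_{\omega_{n-1}}(c)$ lie in $C$ and converge to $\pi(\omega)$; closedness of $C$ gives $A_{\mathrm{tar}}\subseteq C$, hence $\overline{A_{\mathrm{tar}}}\subseteq C$. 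This establishes (1).

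For part (2), monotonicity of $\mathcal{B}$ together with $K\subseteq\overline{A_{\mathrm{tar}}}=\mathcal{B}^n(\overline{A_{\mathrm{tar}}})$ gives $\mathcal{B}^n(K)\subseteq\overline{A_{\mathrm{tar}}}$, so $h_s(\mathcal{B}^n(K),\overline{A_{\mathrm{tar}}})=0$ and it remains to control $h_s(\overline{A_{\mathrm{tar}}},\mathcal{B}^n(K))=\sup_{a\in\overline{A_{\mathrm{tar}}}}d(a,\mathcal{B}^n(K))$. The pointwise estimate is clean: for $a=\pi(\omega)$ and any $\kappa\in K$, both $\pi(\omega)$—which lies in the nested compact set $f_{\omega_0}\circ\cdots\circ f_{\omega_{n-1}}(X)$ as a limit of points eventually in it—and $f_{\omega_0}\circ\cdots\circ f_{\omega_{n-1}}(\kappa)\in\mathcal{B}^n(K)$ lie in that set, so $d(\pi(\omega),\mathcal{B}^n(K))\le\mathrm{diam}(f_{\omega_0}\circ\cdots\circ f_{\omega_{n-1}}(X))\to 0$.

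The main obstacle is upgrading this pointwise decay to convergence in $d_H$: the diameters decay at a rate depending on $\omega$, and $S_{\mathrm{wh}}$ need not be closed, so no uniform rate is available and a Dini-type argument does not apply directly. I would avoid uniform estimates altogether by using compactness of the hyperspace $(\mathscr{H}(X),d_H)$. Let $A_\infty$ be any subsequential limit of $(\mathcal{B}^n(K))$, say $\mathcal{B}^{n_j}(K)\to A_\infty$. Each $\mathcal{B}^{n_j}(K)\subseteq\overline{A_{\mathrm{tar}}}$ forces $A_\infty\subseteq\overline{A_{\mathrm{tar}}}$, while the pointwise estimate along $n_j$ gives $d(\pi(\omega),A_\infty)=0$ for every $\omega\in S_{\mathrm{wh}}$, i.e. $A_{\mathrm{tar}}\subseteq A_\infty$ and hence $\overline{A_{\mathrm{tar}}}\subseteq A_\infty$. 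Thus every subsequential limit equals $\overline{A_{\mathrm{tar}}}$, and compactness of $\mathscr{H}(X)$ forces the whole sequence $\mathcal{B}^n(K)$ to converge to $\overline{A_{\mathrm{tar}}}$, which is (2).
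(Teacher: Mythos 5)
Your proof is correct, and while its overall architecture (forward invariance of $A_{\mathrm{tar}}$ via the commutation $\pi(i\omega)=f_i(\pi(\omega))$, minimality via $f_{\omega_0}\circ\cdots\circ f_{\omega_n}(c)\to\pi(\omega)$ inside a closed fixed point, then hyperspace compactness for item (2)) matches the paper's, it diverges at two genuine points. First, to upgrade $\mathcal{B}(\overline{A_{\mathrm{tar}}})\subseteq\overline{A_{\mathrm{tar}}}$ to equality, the paper avoids the failure of forward invariance of $S_{\mathrm{wh}}$ by an indirect trick: it forms $(\overline{A_{\mathrm{tar}}})^*=\bigcap_n\mathcal{B}^n(\overline{A_{\mathrm{tar}}})$, notes this is a fixed point by its Lemma \ref{l.p.existence}, and then invokes minimality to squeeze $\overline{A_{\mathrm{tar}}}=(\overline{A_{\mathrm{tar}}})^*$. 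You instead prove $A_{\mathrm{tar}}\subseteq\mathcal{B}(\overline{A_{\mathrm{tar}}})$ directly by splicing a fixed weakly hyperbolic tail $\eta$ onto truncations of $\omega$ and passing to a convergent subsequence of $w_m=\pi(\omega_1\cdots\omega_m\eta)$; this is more constructive and makes explicit where each $\pi(\omega)$ comes from, at the cost of a little more work (the paper's route gets the same conclusion almost for free once minimality is in hand). Second, for item (2) the paper isolates the nontrivial half as Lemma \ref{dista} ($h_s(\overline{A_{\mathrm{tar}}},\mathcal{B}^n(K))\to 0$ for \emph{arbitrary} compact $K$, which it also reuses for Theorem \ref{mt.local}) and proves it by contradiction; your direct pointwise bound $d(\pi(\omega),\mathcal{B}^n(K))\le\mathrm{diam}(f_{\omega_0}\circ\cdots\circ f_{\omega_{n-1}}(X))$ followed by the identification of all subsequential limits in $\mathscr{H}(X)$ is cleaner and, as you note, sidesteps the lack of a uniform decay rate; it too works for arbitrary compact $K$ in the half where it is needed. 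The only steps you leave implicit are elementary and appear in the paper as Claims \ref{c.l.semnome} and \ref{l.era14} (nested intersections as Hausdorff limits, and continuity of $(p,A)\mapsto d(p,A)$ under Hausdorff convergence), so there is no gap.
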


\begin{remark}{\em{The regularity of the IFS does not imply that $S_{\mathrm{wh}}\neq \emptyset$ 
and vice-versa. Example \ref{ex.nonregular} provides an IFS with $S_{\mathrm{wh}}\neq \emptyset$   which is not regular.
Hence, our approach provides  a ``new class'' of semifractals. Since 
minimum fixed points are unique, it follows that for regular IFSs with  $S_{\mathrm{wh}}\neq \emptyset$ the semifractal
(as defined by Lasota and Myjak)  is the closure of the target set.}}
\end{remark}
% A natural  question concerns the existence of a 
%\emph{minimum fixed point} $Y$ of $\mathcal{B}$.
%By definition,  maximum and minimum fixed points are  unique.

\begin{remark}\emph{Zorn's lemma provides a
\emph{minimal fixed point} for the $\mathrm{BH}$-operator (i.e.,
a fixed point that does not contain properly another fixed point).
However, as mentioned above,  minimum fixed points may not exist.
Note that, by definition, a minimum fixed point is minimal, but the converse is not true in general.
Above we have seen two different contexts leading to minimum fixed points: regularity of the IFS and 
the existence of some weakly hyperbolic sequence.
It is natural to ask for a unifying condition  guaranteeing the existence of a minimum fixed 
point of the $\mathrm{BH}$-operator. }
\end{remark}

%The following corollary that allows us to stablish a   
% connection between  the set $\overline{A_{\mathrm{tar}}}$ and  semifractals.
%\begin{coro}\label{c.l.atractinside}
%Consider an IFS such that $S_{wh}\neq \emptyset$. Then 
%$$
%\lim_{n\to \infty}\mathcal{B}^{n}(K)= \overline{A_{\mathrm{tar}}}, \quad 
%\mbox{for every 
%compact set $K\subset \overline{A_{\mathrm{tar}}}$.}
%$$
%\end{coro}
%\begin{proof}
%The statement is an immediate consequence of Lemma~\ref{l.atracaofatal} and the invariance of 
%$\overline{A_{\mathrm{tar}}}$.
%\end{proof}
% \begin{remark}\label{r.putting}{\em{
% Combining  Propositions~\ref{p.l.minimality} and
% Corollary
%\ref{c.l.atractinside} one gets the following: \lmargem{aparentemente isto esta repetido, podemos escrever isto antes do Corol.}
%  if $S_{\mathrm{wh}}\ne \emptyset$ then
%   set $\overline{A_{\mathrm{tar}}}$ is a minimum fixed point that
%attracts every compact set inside it.}}
%\end{remark}

\subsection{Attractors of the Barnsley-Hutchinson operator}\label{strictconley}
To  state our  next results precisely we need the following definition.

\begin{defi}[Strict and Conley attractors]
{\em{
A compact set  $A\subset X$ is a  \emph{strict attractor}  
of the $\mathrm{BH}$-operator
 if there is an open neighbourhood
 $U$ of $A$ such that 
 $$
\lim_{n\to\infty} \mathcal{B}^{n}(K)= A
 \quad
 \mbox{for every  compact set $K\subset U$.}
 $$
The \emph{basin of attraction} of $A$ is the largest open neighbourhood of $A$ for which the above  property holds. A strict attractor whose basin of attraction is the whole space is  a 
{\emph{global attractor.}}

  A compact set $S\subset X$ is a \emph{Conley attractor} of  the $\mathrm{BH}$-operator
   if there exists  an open neighbourhood $U$ of $S$ such that
 $$
 \lim_{n\to \infty} \mathcal{B}^{n}(\overline{U})=S.
 $$}}
 \end{defi}
 
 \begin{remark}{\em{
The continuity of the $\mathrm{BH}$-operator    implies that Conley  
  and strict attractors are both fixed points of  $\mathcal{B}$.
Note also that strict attractors are Conley attractors but the converse is not true in general.}}
 \end{remark}
 
 The next theorem shows that the target set  plays a key role in the study of strict attractors
 of the $\mathrm{BH}$-operator
 for IFSs with $S_{\mathrm{wh}}\neq \emptyset$.

%\vskip 1cm
%
%In the seminal paper \cite{Hu}, Hutchinson considered the case where  the maps  $T_i$
%of the IFS ${\mathfrak{F}}$ are 
%(uniformly) contracting maps defined on complete spaces and proved that the
%IFS  ${\mathfrak{F}}$ has a (unique)
%global  attractor. In the IFS theory these systems are called {\emph{hyperbolic}}.

 \medskip

 \begin{mteo}\label{mt.local}
 Consider an IFS with $S_{\mathrm{wh}}\neq \emptyset$. 
Then the following  holds:
 \begin{itemize}
 \item[(1)]
 The  $\mathrm{BH}$-operator 
has at most one strict attractor. Moreover, if such a strict 
attractor exists then 
it is equal to $\overline{A_{\mathrm{tar}}}$.
 \item[(2)]
  $\overline{A_{\mathrm{tar}}}$ is a 
 Conley attractor if and only if it is a strict attractor. 
 \end{itemize}
 \end{mteo}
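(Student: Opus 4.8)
The plan is to handle the two items separately, taking Theorem~\ref{semifractal} as the main input and exploiting two elementary features of the $\mathrm{BH}$-operator: it is \emph{monotone} (if $K\subseteq K'$ then $\mathcal{B}(K)\subseteq\mathcal{B}(K')$, since $\mathcal{B}(A)=\bigcup_i f_i(A)$) and satisfies $\mathcal{B}^n(A)=\bigcup_{(i_1,\dots,i_n)} f_{i_1}\circ\cdots\circ f_{i_n}(A)$.

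For item (1), I would start from an arbitrary strict attractor $A$ with basin $U$. As recorded in the remark, $A$ is a fixed point of $\mathcal{B}$, and by Theorem~\ref{semifractal}(1) the set $\overline{A_{\mathrm{tar}}}$ is the minimum fixed point, so $\overline{A_{\mathrm{tar}}}\subseteq A\subseteq U$. Now $\overline{A_{\mathrm{tar}}}$ is itself a compact fixed point lying inside the basin $U$, so taking $K=\overline{A_{\mathrm{tar}}}$ in the defining property of the strict attractor yields $\mathcal{B}^n(\overline{A_{\mathrm{tar}}})\to A$; but $\mathcal{B}^n(\overline{A_{\mathrm{tar}}})=\overline{A_{\mathrm{tar}}}$ for every $n$, whence $A=\overline{A_{\mathrm{tar}}}$. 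Uniqueness is then immediate, and this item presents no real obstacle.

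For item (2), the implication ``strict $\Rightarrow$ Conley'' is the general fact recorded in the remark, so the content lies in the converse. Assume $\overline{A_{\mathrm{tar}}}$ is a Conley attractor, witnessed by an open neighbourhood $U$ with $\mathcal{B}^n(\overline{U})\to\overline{A_{\mathrm{tar}}}$. I claim $U$ makes $\overline{A_{\mathrm{tar}}}$ a strict attractor, i.e.\ $\mathcal{B}^n(K)\to\overline{A_{\mathrm{tar}}}$ for every nonempty compact $K\subseteq U$, and I would split this into two one-sided Hausdorff estimates. The outer estimate $h_s(\mathcal{B}^n(K),\overline{A_{\mathrm{tar}}})\to 0$ is immediate from monotonicity: $K\subseteq\overline{U}$ gives $\mathcal{B}^n(K)\subseteq\mathcal{B}^n(\overline{U})$, so $h_s(\mathcal{B}^n(K),\overline{A_{\mathrm{tar}}})\le h_s(\mathcal{B}^n(\overline{U}),\overline{A_{\mathrm{tar}}})\to 0$ by hypothesis. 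For the inner (covering) estimate I would first prove a purely pointwise lower bound valid for \emph{any} nonempty $K$: given $a=\pi(\omega)\in A_{\mathrm{tar}}$ with $\omega\in S_{\mathrm{wh}}$ and any $x_0\in K$, the point $f_{\omega_0}\circ\cdots\circ f_{\omega_{n-1}}(x_0)$ belongs to $\mathcal{B}^n(K)$ and converges to $a$, since $\mathrm{diam}\,(f_{\omega_0}\circ\cdots\circ f_{\omega_{n-1}}(X))\to 0$ and $\pi(\omega)$ is independent of the base point; hence $d(a,\mathcal{B}^n(K))\to 0$ for each $a\in A_{\mathrm{tar}}$.

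The main obstacle is that this lower bound is only pointwise, while the covering estimate $h_s(\overline{A_{\mathrm{tar}}},\mathcal{B}^n(K))\to 0$ requires uniformity over $\overline{A_{\mathrm{tar}}}$. I would remove this gap using the compactness of $(\mathscr{H}(X),d_H)$: take an arbitrary convergent subsequence $\mathcal{B}^{n_j}(K)\to C$. The outer estimate forces $C\subseteq\overline{A_{\mathrm{tar}}}$, while the continuity of $B\mapsto d(a,B)$ combined with the pointwise lower bound gives $d(a,C)=0$, i.e.\ $a\in C$, for every $a\in A_{\mathrm{tar}}$; hence $\overline{A_{\mathrm{tar}}}\subseteq C$ and therefore $C=\overline{A_{\mathrm{tar}}}$. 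Since every subsequential limit equals $\overline{A_{\mathrm{tar}}}$ and $\mathscr{H}(X)$ is compact, the full sequence converges, $\mathcal{B}^n(K)\to\overline{A_{\mathrm{tar}}}$, which is exactly the strict-attractor property with $U$ contained in the basin of $\overline{A_{\mathrm{tar}}}$. The one genuinely delicate step is this upgrade from pointwise approximation to uniform covering; the remainder is bookkeeping with monotonicity and the minimum-fixed-point property furnished by Theorem~\ref{semifractal}.
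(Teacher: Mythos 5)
Your proposal is correct and follows essentially the same route as the paper: item (1) via the minimum-fixed-point property of $\overline{A_{\mathrm{tar}}}$ from Theorem~\ref{semifractal}, and item (2) by splitting into the two one-sided Hausdorff estimates, with the outer one coming from monotonicity of $\mathcal{B}$. Your inner (covering) estimate is just an inline repackaging of the paper's Lemma~\ref{dista}: where you prove pointwise approximation of each $a=\pi(\omega)$ by points of $\mathcal{B}^n(K)$ via shrinking cylinders and then upgrade to uniformity through subsequential limits in the compact hyperspace $(\mathscr{H}(X),d_H)$, the paper reaches the same conclusion by contradiction with a sup-realizing sequence and the same two ingredients.
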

% In view of this theorem, when $\overline{A_{\mathrm{t}}}$   is a Conley attractor we say that it is
% a {\emph{Hutchinson attractor.}}
% 

Let us observe that in \cite{Vince} Vince 
obtained a result similar to Theorem \ref{mt.local} for  a completely different class of non-hyperbolic IFSs.
He shows that an
IFS consisting of M\"obius maps (M\" obius IFSs) has at most one strict attractor
and that a compact set is a Conley attractor if and only if it is a strict attractor. Note that 
for M\"obius IFSs the set $S_{\mathrm{wh}}$ is empty.

The next theorem generalises Edalat's results in \cite{Edalat} in two ways:  
(i) it applies to IFSs which are not weakly hyperbolic,
(ii)  it provides not only a sufficient condition (as in \cite{Edalat}) but also a necessary condition for the existence of a global attractor. To state the theorem 
consider 
  the set
  $$
X^* \eqdef
\bigcap_{n\geq 0} \mathcal{B}^{n}(X)
$$
which is a fixed point of  $\mathcal{B}$ (see Lemma \ref{l.p.existence}).
Note that the set $X^{*}$ is the \emph{maximum fixed point}
 of the map $\mathcal{B}$ (i.e.,
every fixed point of $\mathcal{B}$ is contained in $X^{*}$).

\begin{mteo}\label{mt.att}
 Consider an IFS with $S_{\mathrm{wh}}\neq \emptyset$. 
 The following three assertions are equivalent: 
\begin{itemize}
 \item[(1)] 
$\overline{A_\mathrm{tar}}=X^*$,
 %the set $A_\mathrm{t}$ is dense in $X^*$;
 \item[(2)]
the $\mathrm{BH}$-operator has a unique fixed point,
\item[(3)]
$X^{*}$ is a global attractor of the $\mathrm{BH}$-operator.
\end{itemize}
\end{mteo}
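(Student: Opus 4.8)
The plan is to prove the cyclic chain of implications $(1)\Rightarrow(3)\Rightarrow(2)\Rightarrow(1)$, using the three structural facts already available: that $\overline{A_{\mathrm{tar}}}$ is the \emph{minimum} fixed point (Theorem~\ref{semifractal}(1)) and attracts its own compact subsets (Theorem~\ref{semifractal}(2)), and that $X^{*}=\bigcap_{n\geq0}\mathcal{B}^{n}(X)$ is the \emph{maximum} fixed point (Lemma~\ref{l.p.existence} and the remark following it). Since every fixed point lies between these two, $\overline{A_{\mathrm{tar}}}\subseteq X^{*}$ always holds, and equality in either direction pins down uniqueness.

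For $(1)\Rightarrow(3)$, assume $\overline{A_{\mathrm{tar}}}=X^{*}$. I want to show $X^{*}$ is a global attractor, i.e. $\mathcal{B}^{n}(K)\to X^{*}$ for every compact $K\subseteq X$. The natural route is to sandwich $\mathcal{B}^{n}(K)$ between an inner and an outer estimate. For the outer bound, monotonicity of $\mathcal{B}$ gives $K\subseteq X$, hence $\mathcal{B}^{n}(K)\subseteq\mathcal{B}^{n}(X)\searrow X^{*}$, which controls $h_{s}(\mathcal{B}^{n}(K),X^{*})$. For the inner bound I would use Theorem~\ref{semifractal}(2): taking $K$ intersected appropriately, or rather applying the attraction property to a fixed compact subset of $\overline{A_{\mathrm{tar}}}=X^{*}$, I can force the iterates to fill out $X^{*}$ from inside, controlling $h_{s}(X^{*},\mathcal{B}^{n}(K))$. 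Combining the two one-sided estimates yields convergence in $d_{H}$ with basin all of $X$.

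The implication $(3)\Rightarrow(2)$ is essentially the remark already recorded in the excerpt: a global (hence strict) attractor is a fixed point of $\mathcal{B}$, and by continuity of $\mathcal{B}$ it is an attracting fixed point. Any other fixed point $F$ is a compact set contained in the basin of attraction (which is the whole space), so $\mathcal{B}^{n}(F)=F\to X^{*}$ forces $F=X^{*}$. Hence the fixed point is unique. For $(2)\Rightarrow(1)$, uniqueness of the fixed point together with the fact that both $\overline{A_{\mathrm{tar}}}$ and $X^{*}$ are fixed points immediately gives $\overline{A_{\mathrm{tar}}}=X^{*}$, closing the cycle.

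The main obstacle I expect is the inner estimate in $(1)\Rightarrow(3)$: showing that $\mathcal{B}^{n}(K)$ eventually comes uniformly close to \emph{all} of $X^{*}$, not merely to the part reachable from $K$. The clean way is to invoke Theorem~\ref{semifractal}(2) with $K$ replaced by a compact subset of $\overline{A_{\mathrm{tar}}}$ and then transfer the conclusion to a general $K$ via the fact that any compact $K$, after one application of $\mathcal{B}$ combined with the weakly hyperbolic structure, has iterates accumulating on points coded by $S_{\mathrm{wh}}$; here the hypothesis $\overline{A_{\mathrm{tar}}}=X^{*}$ is exactly what guarantees these accumulation points exhaust $X^{*}$. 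Verifying that the two one-sided Hausdorff estimates can be made simultaneously small, uniformly in $K$, is the technical heart of the argument.
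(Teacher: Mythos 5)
Your proposal is correct and follows essentially the same route as the paper: the equivalence of (1) and (2) via the minimum/maximum fixed point characterisations, $(3)\Rightarrow(2)$ by attraction of any other fixed point, and $(1)\Rightarrow(3)$ by sandwiching $\mathcal{B}^{n}(K)$ between the monotone outer convergence $\mathcal{B}^{n}(X)\searrow X^{*}$ and an inner estimate showing the iterates accumulate on every point coded by $S_{\mathrm{wh}}$. The ``technical heart'' you flag --- the uniform inner bound $h_{s}(\overline{A_{\mathrm{tar}}},\mathcal{B}^{n}(K))\to 0$ for an arbitrary compact $K$ --- is exactly Lemma~\ref{dista}, which the paper proves separately by a compactness/contradiction argument and then invokes here through Scholium~\ref{sc.atracaofatal2}.
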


The Hutchinson theory, the hyperbolic as well as the non-hyperbolic one,
has a topological and a probabilistic point of view. Despite 
their different nature,
these two points of view ``talk to each other'' through the coding map, which plays a key role in both 
approaches. Let us illustrate this ``talk''. Starting with the probabilistic side,  
it is well known, see Letac \cite{Letac}\footnote{Note that the terminology in \cite{Letac} is
different. We translated the results to our terminology.}, that if $\mathbb{P}(S_{\mathrm{wh}})=1$, then 
the distribution (``probabilistic image'') of the coding map is an attracting\footnote{Under the action of
the Markov operator associated with the Markov chain.} stationary measure of the Markov 
chain $\omega\mapsto \ZZ_{n}(\omega)\eqdef f_{\omega}^{n}(x)$, for every $x$. In other words, if 
$S_{\mathrm{wh}}$ is big in the probabilistic sense, one of the main results of the hyperbolic Hutchinson 
theory  (uniqueness of stationary measures)
 is
recovered.  Considering now the topological side,
item (1) of Theorem \ref{mt.att} shows that if the 
target set
(i.e, the topological image of the coding map) is topologically big
 then the $\mathrm{BH}$-operator has a global attractor. 
Note that when
$\mathbb{P}(S_{\mathrm{wh}})=1$, it is clear that $\overline{A_{\mathrm{tar}}}$ is the support of the unique stationary measure $m$,
which is given by $m(A)=\mathbb{P}(\pi^{-1}(A))$.
This discussion leads to the following question: 

\begin{ques}{\em{
Theorem~\ref{mt.att} claims that 
the target set is ``big'' if and only the $\mathrm{BH}$-operator has a global attractor. On the other hand, 
Letac's result \cite{Letac} only provides the sufficient condition {\emph{if $S_{\mathrm{wh}}$  is ``big'' then there is
an attracting stationary measure.}} Is the converse true? }}
\end{ques}

\begin{remark}\label{EDtarget}
\emph{The name \emph{target set}  for $ A_{\mathrm{tar}}$ is justified by the following characterisation 
\begin{equation}\label{e.characterizationAt}
 A_{\mathrm{tar}}=\{x\in X \colon \mbox{there is $\omega \in \Sigma_k$ with $\{x\}=\bigcap_n 
 f_{\omega_0}\circ\dots\circ f_{\omega_n}(X)$}\}.
\end{equation}
This follows observing that $\{f_{\omega_{0}}\circ \dots \circ f_{\omega_{n}}(X)\}$ is a sequence of nested compact sets.}
\emph{
Let us observe that for weakly hyperbolic IFSs, in \cite{Edalat} it is proved that 
\begin{equation}\label{edalat}
X^{*}=\bigcup_{\omega\in \Sigma_{k}} I_{\omega}, 
\quad \mbox{where $I_{\omega}\eqdef\bigcap_n f_{\omega_{0}}\circ \dots \circ f_{\omega_{n}}(X)$.}
\end{equation}
In particular, for weakly hyperbolic IFSs it holds $A_{\mathrm{tar}}=X^{*}$.}

{\em{
 It is worth mentioning that there are  non-weakly hyperbolic IFSs still satisfying
$A_{\mathrm{tar}}=  \overline{A_{\mathrm{tar}}}= X^{*}$, see Example~\ref{ex.ultimahora}. 
However, 
 since there are cases
 where 
$A_{\mathrm{tar}}\subsetneq \overline{A_{\mathrm{tar}}}= X^*$, see Example~\ref{ex.outromais},
 it is indispensable considering
the closure of the target set in item (1) of Theorem~\ref{mt.att}.}}
\end{remark}

%
%%  To stress the non-hyperbolic nature of the IFSs that we consider,
%% we observe that our results apply to IFSs having maps with ``repelling'' fixed points. 
%This shows
% that the non-hyperbolic character of the maps of the IFS does not prevent the hyperbolicity of the
% Barnsley-Hutchinson operator. 
% %This fact resembles what happens in the hyperbolic theory
% %of diffeomorphisms: hyperbolic  sets may have expanding directions 
% %although they are obtained as a fixed points of a contracting map, see \cite{??}. \marginpar{completar} 
% We next state precisely our results.

\subsection{The chaos game}\label{thechaosgamee}
We now see how random iterations draw ``stable'' target sets. For that
we need some preliminary definitions.
Recall that 
the shift on $\Sigma_{k}$ is the map $\sigma\colon \Sigma_k \to \Sigma_k$ defined by $(\sigma(\omega))_n=\omega_{n+1}$.
 A sequence of $\Sigma_k$ whose orbit is dense in $\Sigma_{k}$, with respect to the shift map, is called \emph{disjunctive}. We denote by $\mathcal{D}$ the set 
 of disjunctive sequences. 
Note that,  as a consequence of Birkhoff's theorem, for every ergodic measure $\mu$ (with respect to  $\sigma$)
 with full support it holds $\mu(\mathcal{D})=1$.

% We start by recalling one of the first results about deterministic chaos game for non-hyperbolic 
%IFSs. In \cite{Lesniak}
%the authors consider an IFS$(f_{1},\dots, f_{k})$ 
% having a strict attractor  $K$ that is \emph{strongly fibred}, meaning that for every point $x\in K$ and every 
% neighbourhood $V$ of $x$, there is a finite word $c_{1}\dots c_{\ell}$ such that $f_{c_{\ell}}\circ \dots\circ
% f_{c_{1}}(K)\subset V$.   Then it is proved that for every $x$ in the basin of attraction of $K$ it holds
% \begin{equation}\label{tail}
% \lim_{\ell\to \infty} \{f_{\omega}^{n}(x)\colon n\geq \ell\}=K, 
% \end{equation}
% for every $\omega\in \Sigma_{k}$ whose orbit is dense in $\Sigma_{k}$ (with respect to the shift map). 
% Recall that the shift on $\Sigma_{k}$ is the map $\sigma\colon \Sigma_k \to \Sigma_k$ defined by $\sigma(\omega)_n=\omega_{n+1}$.
 
% A sequence of $\Sigma_k$ whose orbit by $\sigma$ is dense is called \emph{disjunctive},  we denote by $\mathcal{D}$ the set 
% of disjunctive sequences. Note that, for every ergodic measure $\mu$ (with respect to  $\sigma$)
% with full support, by Birkhoff's theorem, one has  $\mu(\mathcal{D})=1$.

A fixed point $K$ of the $\mathrm{BH}$-operator
is  \emph{stable} if for every open neighbourhood $V$ of $K$ there
is an open  neighbourhood $V_{0}$ of $K$ such that 
\begin{equation}
\label{e.stableBH}
\mathcal{B}^{n}(V_{0})\subset V \quad \mbox{for every}\quad n\geq 0.
\end{equation}
For instance,
the set $\overline{A_{\mathrm{tar}}}$ is stable if it is a Conley attractor
(but this is not a necessary condition, see Example~\ref{example1})
or if 
all the  maps of the IFS  are {\emph{non-expanding}} (i.e., all maps are Lipschitz with  constant less than or equal to $1$)\footnote{In the literature, these systems are also called {\emph{non-expansive.}}}. 
 
\begin{mteo} 
\label{mt.jogodocaos}
Consider  an IFS 
such that $\overline{A_{\mathrm{tar}}}$ is a stable fixed point of the $\mathrm{BH}$-operator.
 Then for every  $x\in X$ and every disjunctive sequence $\omega\in \Sigma_k$ we have
%$$
%\overline{A_{\mathrm{tar}}}=\bigcap_{\ell \geq 0} \overline{\{f_{\omega}^{n}(x)\colon n\geq \ell\}}.
%$$ 
%In particular, it holds
 $$
\lim_{\ell\to \infty} \overline{\{f_{\omega}^{n}(x)\colon n\geq \ell\}}= \overline{A_{\mathrm{tar}}}.
$$
\end{mteo}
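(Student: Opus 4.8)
The goal is to show that for a stable fixed point $\overline{A_{\mathrm{tar}}}$ and any disjunctive $\omega$, the tail sets $\overline{\{f_{\omega}^{n}(x)\colon n\geq \ell\}}$ converge in $d_H$ to $\overline{A_{\mathrm{tar}}}$ as $\ell\to\infty$. This is a two-sided Hausdorff estimate, so I would prove the two inequalities $h_s(\text{tail},\overline{A_{\mathrm{tar}}})\to 0$ and $h_s(\overline{A_{\mathrm{tar}}},\text{tail})\to 0$ separately.

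\textbf{Upper estimate (orbit does not escape a neighbourhood of $\overline{A_{\mathrm{tar}}}$).} The plan is to use stability together with the fact that every point of $X$ is eventually dragged near $\overline{A_{\mathrm{tar}}}$ along the word $\omega$. Fix $\eps>0$ and let $V$ be the $\eps$-neighbourhood of $\overline{A_{\mathrm{tar}}}$; stability gives a neighbourhood $V_0$ with $\mathcal{B}^n(V_0)\subset V$ for all $n\ge 0$. Since $\omega$ is disjunctive, some long finite block $\tau$ of $\omega$ can be chosen so that $f_{\tau_0}\circ\cdots\circ f_{\tau_{m}}(X)$ lies inside $V_0$ — here I would invoke the coding/target machinery: disjunctivity means $\omega$ contains every finite word, in particular a prefix of some weakly hyperbolic sequence whose image is as close as we like to a point of $A_{\mathrm{tar}}$, hence inside $V_0$. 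Once one such initial segment of length $N_0$ lands $f_{\omega}^{N_0}(x)$ in $V_0$, the key observation is that for every later index $n$ the point $f_{\omega}^{n}(x)$ is obtained by applying a composition of IFS maps to $f_{\omega}^{N_0}(x)\in V_0$, so it is of the form $(\text{word})(V_0)\subset \mathcal{B}^{j}(V_0)\subset V$. Thus for all $n\ge N_0$ we have $f_{\omega}^{n}(x)\in V$, which yields $h_s(\overline{\{f_{\omega}^{n}(x)\colon n\ge \ell\}},\overline{A_{\mathrm{tar}}})\le \eps$ for $\ell\ge N_0$.

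\textbf{Lower estimate (orbit gets arbitrarily close to every point of $\overline{A_{\mathrm{tar}}}$).} Fix $y\in A_{\mathrm{tar}}$ and $\eps>0$; I want infinitely many $n$ with $d(f_{\omega}^{n}(x),y)<\eps$. Write $y=\pi(\eta)$ for some $\eta\in S_{\mathrm{wh}}$. Because $\eta$ is weakly hyperbolic, a finite prefix $\eta_0\cdots\eta_{m}$ already satisfies $\mathrm{diam}(f_{\eta_0}\circ\cdots\circ f_{\eta_{m}}(X))<\eps/2$, so the entire image of $X$ under this block is within $\eps/2$ of $y$. Disjunctivity of $\omega$ guarantees this prefix appears in $\omega$ starting at infinitely many positions $j$; note the relevant composition is $f_{\omega}^{n}(x)=f_{\omega_{n-1}}\circ\cdots\circ f_{\omega_0}(x)$, so I must be careful with the order — the block $\eta_0\cdots\eta_m$ should appear as the \emph{most recently applied} maps, i.e.\ as the segment $\omega_{n-1}\cdots\omega_{n-1-m}$, read in the appropriate orientation. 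Since the definition of $f_\omega^n$ composes with the newest symbol outermost, I would arrange for $\omega_{n-m-1}\cdots\omega_{n-1}$ (or the correctly oriented copy) to equal $\eta_0\cdots\eta_m$; disjunctivity still supplies infinitely many such $n$. For each such $n$, $f_{\omega}^{n}(x)$ lies in the image of $X$ under the block, hence within $\eps/2<\eps$ of $y$. Since $y$ was arbitrary in the dense subset $A_{\mathrm{tar}}$ of $\overline{A_{\mathrm{tar}}}$, this gives $h_s(\overline{A_{\mathrm{tar}}},\overline{\{f_{\omega}^{n}(x)\colon n\ge \ell\}})\to 0$.

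\textbf{Main obstacle.} The delicate point is reconciling the two orderings of composition: the semifractal/coding results are phrased with $f_{\omega_0}\circ\cdots\circ f_{\omega_n}$ (newest symbol innermost, as in \eqref{e.pi}), whereas the chaos-game orbit $f_{\omega}^{n}(x)=f_{\omega_{n-1}}\circ\cdots\circ f_{\omega_0}(x)$ applies the newest symbol outermost. In the lower estimate I need a weakly-hyperbolic \emph{prefix} to land in the correct position as a \emph{suffix} of the composition word, and it is exactly disjunctivity (every finite word occurs, in every position, infinitely often) that bridges this; making this orientation bookkeeping precise, while keeping the diameter-contraction estimate valid for the correctly placed block, is where the real care is needed. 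The stability hypothesis does the heavy lifting in the upper estimate and is what allows the argument to run without any contraction or ergodicity assumptions.
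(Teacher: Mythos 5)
Your proposal is correct and follows essentially the same route as the paper: the containment direction uses stability of $\overline{A_{\mathrm{tar}}}$ plus disjunctivity to trap the orbit in an arbitrary neighbourhood $V$ from some time on, and the density direction uses a long prefix of a weakly hyperbolic sequence (suitably reversed so that it acts as the outermost block of $f_\omega^n$) to bring the orbit $\eps$-close to any point of $A_{\mathrm{tar}}$ infinitely often. The only cosmetic difference is that the paper first reduces the limit to the nested intersection $\bigcap_\ell \overline{\{f_\omega^n(x)\colon n\ge\ell\}}$ and proves two set inclusions, whereas you prove the two one-sided Hausdorff estimates directly; the orientation bookkeeping you flag as the main obstacle is handled in the paper exactly as you describe.
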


%In the context of the chaos game,
%\cite{Lasota} considers IFSs
%whose maps are Lipschitz with constants 
%less than or equal to
%$1$  and have
%at least one uniformly contracting map. It is proved
%that the probabilistic chaos game 
%starting at
%any point of the phase space 
%yields the semifractal (even if the semifractal is not an attractor).
%In our setting,
%we get a \textcolor{red}{similar result} 
%\lmargem{enfatizar diferencas} for the disjunctive chaos 
%game where the fractal is the closure of the target set.

We now compare Theorem~\ref{mt.jogodocaos} with some previous results in \cite{Lesniak,Barrientos,Lesniaknon}.

In \cite{Lesniak}
the authors consider an IFS$(f_{1},\dots, f_{k})$ 
 having a strict attractor  $K$ that is \emph{strongly fibred}, meaning that for every point $x\in K$ and every 
 neighbourhood $V$ of $x$, there is a finite word $c_{1}\dots c_{\ell}$ such that $f_{c_{\ell}}\circ \dots\circ
 f_{c_{1}}(K)\subset V$.  
 Under the strongly fibred condition,  it is proved that for every $x$ in the basin of attraction of $K$ it holds
 \begin{equation}\label{tail}
 \lim_{\ell\to \infty} \{f_{\omega}^{n}(x)\colon n\geq \ell\}=K, 
 \end{equation}
 for every disjunctive sequence $\omega$.
We observe that the closure of the target set is a fixed point strongly fibred but, in general, 
it is not a strict attractor, therefore the results in \cite{Lesniak} cannot be applied
since being stable is a weaker condition than being a strict attractor.
Even in the case when $\overline{A_{\mathrm{tar}}}$ is a strict attractor, Theorem~\ref{mt.jogodocaos}
provides further information:
the deterministic chaos game can start at \emph{any point} $x$ and not only on the basis of attraction of
the set
$\overline{A_{\mathrm{tar}}}$.

The deterministic 
chaos game for \emph{well-fibred quasi-attractors} is studied in  \cite{Barrientos}.
We refrain 
to give the precise definition of a  well-fibred quasi-attractor.  For our goals it is sufficient to say that every strong fibred 
fixed point of $\mathrm{BH}$-operator is also a well-fibred quasi-attractor. 
In \cite{Barrientos} it is proved that given any well fibred quasi-attractor $K$  then, 
 for every disjunctive sequence $\omega$, the tail sequence $\{f_{\omega}^{n}(x)\colon n\geq \ell\}_\ell$ converges to 
 $K$ for every point $x$ in the set 
 $$
 \mathcal{B}^{*}(K)\eqdef \{x\in X\colon \bigcap_{m\geq 0} \overline{\bigcup_{n\geq m}\mathcal{B}^{n}(\{x\})}=K \}.
 $$   
 Since
  the closure of the target set is strongly fibred, it is also  well-fibred and hence 
 the result in \cite{Barrientos} can be applied. In this case, Theorem \ref{mt.jogodocaos} 
 also provides additional information since, even if $A_{\mathrm{tar}}$ is stable, the set
 $\mathcal{B}^{*}(K)$
  may not be  
 the whole space, see Example \ref{example1}.

Finally, \cite{Lesniaknon} studies the deterministic chaos game for strict attractors of non-expanding IFSs.
As observed above, in this case the target set is stable (but not necessarily a strict attractor, see
Example~\ref{example1}) and hence Theorem~\ref{mt.jogodocaos} applies to cases which are not covered
by \cite{Lesniaknon}.

\section{Proofs of the Theorems}
\label{s.proofs}

\subsection{Preliminary results}
\label{ss.preliminary}
We start with two auxiliary results. The first one is an elementary lemma about fixed points of the $\mathrm{BH}$-operator
and the second one is an important lemma that will be used in the proofs of Theorems \ref{semifractal} and \ref{mt.local}. 

\begin{lema}\label{l.p.existence}
Consider  $A\in \mathscr{H}( X)$ such that  $\mathcal{B}(A)\subset A$. 
Then the set 
\begin{equation}
\label{e.conjestrela}
A^* \eqdef
\bigcap_{n\geq 0} \mathcal{B}^{n}(A)
\end{equation}
is a  fixed point of $\mathcal{B}$.
In particular, the set $X^{*}$ is a fixed point of $\mcalB$.
\end{lema}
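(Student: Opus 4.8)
The plan is to exploit two elementary features of the $\mathrm{BH}$-operator: its monotonicity (if $B\subset C$ then $\mathcal{B}(B)\subset\mathcal{B}(C)$, immediate from the definition as a union of images) and the fact that it maps $\mathscr{H}(X)$ into itself (a finite union of continuous images of a compact set is compact). First I would note that the hypothesis $\mathcal{B}(A)\subset A$, combined with monotonicity, forces the sequence $\{\mathcal{B}^n(A)\}_{n\geq 0}$ to be nested and decreasing: applying $\mathcal{B}$ repeatedly to $\mathcal{B}(A)\subset A$ gives $\mathcal{B}^{n+1}(A)\subset\mathcal{B}^n(A)$ for all $n$ by induction. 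Thus $\{\mathcal{B}^n(A)\}$ is a decreasing sequence of non-empty compact sets, and by Cantor's intersection theorem the set $A^*=\bigcap_{n\geq 0}\mathcal{B}^n(A)$ is non-empty and compact, so $A^*\in\mathscr{H}(X)$.

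Next I would prove the two inclusions of $\mathcal{B}(A^*)=A^*$ separately. The inclusion $\mathcal{B}(A^*)\subset A^*$ is the easy one and uses only monotonicity: from $A^*\subset\mathcal{B}^n(A)$ we get $\mathcal{B}(A^*)\subset\mathcal{B}^{n+1}(A)$ for every $n\geq 0$, and intersecting over $n$ (using that nestedness yields $\bigcap_{n\geq 1}\mathcal{B}^n(A)=A^*$) gives $\mathcal{B}(A^*)\subset A^*$.

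The reverse inclusion $A^*\subset\mathcal{B}(A^*)$ is where compactness is genuinely needed, and this is the step I expect to be the main obstacle. Given $x\in A^*$, for each $n$ we have $x\in\mathcal{B}^{n+1}(A)=\mathcal{B}(\mathcal{B}^n(A))$, so there are $i_n\in\{1,\dots,k\}$ and $y_n\in\mathcal{B}^n(A)$ with $x=f_{i_n}(y_n)$. Since the index set is finite, by the pigeonhole principle some value $i$ occurs for infinitely many $n$, so after passing to that subsequence I may assume $i_n\equiv i$. As all $y_n$ lie in the compact set $A$, I would extract a further subsequence $y_{n_j}\to y$. For each fixed $m$, every term with $n_j\geq m$ satisfies $y_{n_j}\in\mathcal{B}^{n_j}(A)\subset\mathcal{B}^m(A)$; since $\mathcal{B}^m(A)$ is closed, the limit satisfies $y\in\mathcal{B}^m(A)$, and as $m$ is arbitrary, $y\in A^*$. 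Finally, continuity of $f_i$ gives $x=f_i(y_{n_j})\to f_i(y)$, whence $x=f_i(y)\in\mathcal{B}(A^*)$. This establishes $A^*=\mathcal{B}(A^*)$.

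The final ``in particular'' assertion is then immediate: taking $A=X$, which satisfies $\mathcal{B}(X)\subset X$ and belongs to $\mathscr{H}(X)$ since $X$ is compact, the set $X^*=\bigcap_{n\geq 0}\mathcal{B}^n(X)$ is a fixed point of $\mathcal{B}$.
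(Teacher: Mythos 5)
Your proof is correct, but it follows a different route from the paper's. The paper first establishes (as a separate claim, reused later in the text) that a nested sequence of non-empty compact sets converges in the Hausdorff metric to its intersection, and then concludes in one line by applying the continuity of $\mathcal{B}$ as a map on the hyperspace $(\mathscr{H}(X),d_H)$: since $\mathcal{B}^n(A)\to A^*$, also $\mathcal{B}^{n+1}(A)=\mathcal{B}(\mathcal{B}^n(A))\to\mathcal{B}(A^*)$, and both limits must coincide. You instead prove the two inclusions of $\mathcal{B}(A^*)=A^*$ directly at the level of points: monotonicity for the easy inclusion, and a pigeonhole-plus-diagonal-subsequence argument (finiteness of the index set, compactness of $A$, closedness of each $\mathcal{B}^m(A)$, continuity of $f_i$) for the reverse one. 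Your version is more elementary and self-contained --- it avoids invoking the Hausdorff-metric convergence of nested compacta and the continuity of the $\mathrm{BH}$-operator on the hyperspace, and it makes visible exactly where compactness and the finiteness of the IFS enter. The paper's version is shorter given that its auxiliary claim is needed anyway elsewhere (e.g.\ in the proof of Theorem~\ref{mt.att}), and it packages the argument in a form that generalises to any continuous self-map of $\mathscr{H}(X)$ rather than using the specific union-of-images structure of $\mathcal{B}$. Both are complete proofs of the lemma.
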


\begin{proof}
We need the following
 well-known result whose proof is included for completeness.
\begin{claim}
 \label{c.l.semnome}
 Let $(L_{n})$ be a sequence of nested compact sets, i.e., with $L_{n+1}\subset L_n$.
 Then
 $$
 \lim_{n\to \infty}d_{H}(L_{n},L)= 0, \quad \mbox{where  $L\eqdef \bigcap_{n\geq0} L_{n}$.}
 $$
 \end{claim}
 
  \begin{proof}
The proof is by contradiction. If the claim is false there are
 $\epsilon>0$ and a subsequence $(n_{\ell})$, $n_\ell \to \infty$, such that 
 $d_{H}(L_{n_{\ell}},L)\geq \epsilon$ for all $\ell$. 
 Since $L\subset L_{n}$ for every $n$, 
 for each $\ell$
 there is a point $p_{\ell}\in L_{n_{\ell}}$ such that $d(p_{\ell},L)\geq \epsilon$. 
 By compactness, taking a subsequence if necessary, we can assume that 
 $p_{\ell}\to p$. 
 As $ (L_{n})$ is nested 
 it follows that $p\in L$, contradicting  that
 $d(p_{\ell},L)\geq \epsilon$ for all $\ell$. 
\end{proof}
Applying Claim \ref{c.l.semnome} to the sequence $L_n=\mathcal{B}^n(A)$ and using the continuity of $\mathcal{B}$, the lemma follows.
\end{proof}

\begin{lema}\label{dista}
Consider an IFS with $S_{\mathrm{wh}}\neq \emptyset$. Then 
for every compact set $K$ it holds 
$$
\lim_{n\to \infty} h_{s}(\overline{A_{\mathrm{tar}}}, \mathcal{B}^{n}(K))=0.
$$

\end{lema}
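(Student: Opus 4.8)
The plan is to deduce the one-sided convergence $h_{s}(\overline{A_{\mathrm{tar}}},\mathcal{B}^{n}(K))\to 0$ from a single pointwise estimate together with a compactness argument. The estimate I would record first is: for each $a\in A_{\mathrm{tar}}$, writing $a=\pi(\omega)$ with $\omega\in S_{\mathrm{wh}}$, one has
\[
d\big(a,\mathcal{B}^{n}(K)\big)\leq \mathrm{diam}\big(f_{\omega_{0}}\circ\cdots\circ f_{\omega_{n-1}}(X)\big)\qquad\text{for every }n\geq 1.
\]
To see this, set $C_{n}(\omega)\eqdef f_{\omega_{0}}\circ\cdots\circ f_{\omega_{n-1}}(X)$. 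These sets are nested and decreasing, and since $\omega\in S_{\mathrm{wh}}$ their diameters tend to $0$; hence $\bigcap_{n}C_{n}(\omega)$ is the single point $\pi(\omega)=a$, so $a\in C_{n}(\omega)$ for all $n$. On the other hand $f_{\omega_{0}}\circ\cdots\circ f_{\omega_{n-1}}(K)$ is a nonempty subset of $\mathcal{B}^{n}(K)$ contained in $C_{n}(\omega)$, so both $a$ and this subset lie in $C_{n}(\omega)$, whence $d(a,\mathcal{B}^{n}(K))\leq \mathrm{diam}(C_{n}(\omega))$. In particular $d(a,\mathcal{B}^{n}(K))\to 0$ for each fixed $a$.

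The difficulty is that this convergence is not uniform in $a$: the rate is governed by $\mathrm{diam}(C_{n}(\omega))$, and since $S_{\mathrm{wh}}$ need not be closed, let alone compact, in $\Sigma_{k}$, the supremum $\sup_{\omega\in S_{\mathrm{wh}}}\mathrm{diam}(C_{n}(\omega))$ may fail to vanish. Thus I cannot extract a uniform threshold directly. The key idea is that uniformity over all of $A_{\mathrm{tar}}$ is unnecessary: it suffices to control one limit point at a time, exploiting that $\overline{A_{\mathrm{tar}}}\in\mathscr{H}(X)$ is compact.

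Concretely, I would argue by contradiction. If the conclusion fails there are $\epsilon>0$ and $n_{\ell}\to\infty$ with $h_{s}(\overline{A_{\mathrm{tar}}},\mathcal{B}^{n_{\ell}}(K))\geq\epsilon$; since $x\mapsto d(x,\mathcal{B}^{n_{\ell}}(K))$ is continuous and $A_{\mathrm{tar}}$ is dense in $\overline{A_{\mathrm{tar}}}$, I may choose $a_{\ell}\in A_{\mathrm{tar}}$ with $d(a_{\ell},\mathcal{B}^{n_{\ell}}(K))>\epsilon/2$. By compactness of $\overline{A_{\mathrm{tar}}}$ I pass to a subsequence with $a_{\ell}\to a^{*}\in\overline{A_{\mathrm{tar}}}$, so that $d(a^{*},\mathcal{B}^{n_{\ell}}(K))>\epsilon/4$ for all large $\ell$. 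Now I fix a single point $b\in A_{\mathrm{tar}}$ with $d(b,a^{*})<\epsilon/16$, say $b=\pi(\eta)$ with $\eta\in S_{\mathrm{wh}}$. Applying the pointwise estimate to $b$ gives a finite $N$ with $d(b,\mathcal{B}^{n}(K))<\epsilon/16$ for all $n\geq N$. Since $n_{\ell}\to\infty$, for $\ell$ large we have $n_{\ell}\geq N$, and hence $d(a^{*},\mathcal{B}^{n_{\ell}}(K))\leq d(a^{*},b)+d(b,\mathcal{B}^{n_{\ell}}(K))<\epsilon/8$, contradicting $d(a^{*},\mathcal{B}^{n_{\ell}}(K))>\epsilon/4$. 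The feature that makes this work is that the threshold $N$ depends only on the fixed approximant $b$, while $n_{\ell}$ is free to exceed it, so the diverging depth compensates for the absent uniform decay rate. I expect the pointwise estimate and the density bookkeeping to be routine; the genuinely essential step is the decision to approximate the limit point $a^{*}$ by a good target point rather than to seek uniform control over $S_{\mathrm{wh}}$.
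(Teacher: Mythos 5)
Your proof is correct and follows essentially the same route as the paper's: argue by contradiction, extract a limit point $a^{*}$ of the witnesses to the failure, approximate it by a point of $A_{\mathrm{tar}}$, and use the shrinking sets $f_{\omega_{0}}\circ\cdots\circ f_{\omega_{n-1}}(X)$ along the corresponding weakly hyperbolic sequence to place points of $\mathcal{B}^{n_{\ell}}(K)$ arbitrarily close to $a^{*}$. The only difference is cosmetic: by working directly with the distances $d(a^{*},\mathcal{B}^{n_{\ell}}(K))$ you avoid extracting a hyperspace limit $\widehat{K}$ of the sequence $\mathcal{B}^{n_{\ell}}(K)$ and the accompanying continuity-of-distance statement (Claim~\ref{l.era14}) that the paper uses.
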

\begin{proof}
The proof is by contradiction. Assume 
 that there are a compact set $K\in \mathscr{H}(X)$ and a sequence $(n_\ell)$ such that
$h_{s}(\overline{A_{\mathrm{tar}}},\mathcal{B}^{n_{\ell}}(K))\ge \epsilon$ for every
$\ell$. 
%That is, there exists a subsequence of $\mathcal{B}^{n}(K)$ with the above property. 
Note that for each $\ell$ 
there is $p_{n_\ell}\in \overline{A_{\mathrm{tar}}}$ with $d(p_{n_\ell},\mathcal{B}^{n_\ell}(K))\ge \epsilon$. 
By compactness, we can assume that
$p_{n_{\ell}}\rightarrow p^{*}\in \overline{A_{\mathrm{tar}}}$ 
and that $\mathcal{B}^{n_{\ell}}(K)\rightarrow  \widehat{K}$ for some compact set $\widehat{K}$.
%such that $d(p_{n_{\ell}},\mathcal{B}^{n_{\ell}}(K))> \epsilon$ for all $\ell$. 
 We need the following elementary claim whose prove we present for completeness: 
 \begin{claim}
\label{l.era14}
 Consider sequences
 $(A_n)$ in $\mathscr{H}(X)$ and
 $(p_n)$ of points in $X$
 with
 $A_n\to A$ and $p_n\to p$ in the  
 Hausdorff distance $d_{H}$. 
 Then 
 $$
 d (p,A)=\displaystyle\lim_{n\rightarrow \infty} d(p_{n},A_{n}).
 $$
 \end{claim}
 
 \begin{proof}
 We use  the following ``triangular'' inequality:
 given a point $q$ and two compact sets $A$ and $B$ it holds
 %\begin{equation}
 %\label{e.Htriangular}
 $$
 d(q,A)\le d(q,B)+ d_H(A,B).
 %\end{equation}
$$ 
Consider the sequences $(A_n)$ and $(p_n)$ in the claim. 
% Since $A_n$ is compact 
 %there is $q_{n}\in A_{n}$ 
 %such that
 %$d (A_{n},p_{n})=d(q_n,p_n)$. 
 Applying twice the ``triangular'' inequality
 above we get
$$
 d (p,A)\leq d(p,p_{n})+d(p_{n},A)\leq  d(p,p_{n})+d(p_{n},A_{n})+d_H(A_{n},A).
$$
By hypothesis,  $d(p,p_{n})\to 0$ and $d_{H}(A_{n},A)\to 0$.  
%Condition $q_n\in A_n$ implies that
%$d(q_{n},A)\to 0$.
We conclude that 
\begin{equation}
 \label{eq.liminf}
d(p,A)\leq \liminf_{n} d(p_{n},A_{n}).
\end{equation}
%Now take $a\in A$ such that $d(p,a)=d(p,A)$.  Then,

Applying again twice the ``triangular'' inequality, we get
 $$ 
 d (p_{n},A_{n})\leq 
 d(p_{n},p)+d(p,A_{n})\leq d(p_{n},p)+d(p,A)+ d_H(A,A_{n}).
 $$
%As above we have $ d(p_{n},p)\to 0$ and $d(a,A_{n})\to 0$.
Arguing as above, this implies that
\begin{equation}
 \label{eq.limsup}
 \limsup_{n} d(p_{n},A_{n})\leq d(p,A).
 \end{equation}
 Equations \eqref{eq.liminf} and \eqref{eq.limsup} imply the claim.
 \end{proof}
 It follows from Claim \ref{l.era14} that
\begin{equation}
\label{e.acontradicao}
d(p^{*},\widehat{K})\geq \epsilon.
\end{equation}
We now derive a contradiction from this inequality.
By construction, there is 
 $\ell_{0}$ such that 
%$d_{H}(\mathcal{B}^{n_{\ell}}(K),\widehat{K})<\frac{\epsilon}{2}$ for all $\ell\geq \ell_{0}$. 
%In particular, 
\begin{equation}
\label{e.byconstruction}
h_{s}(B^{n_{\ell}}(K),\widehat{K})<\frac{\epsilon}{3},
\quad
\mbox{for all $\ell\geq \ell_0$.}
\end{equation}
Take $q\in B_{\frac{\epsilon}{3}}(p^{*})\cap A_{\mathrm{tar}}$. By the definition of $A_{\mathrm{tar}}$ in
\eqref{e.characterizationAt},
there is a sequence $\omega=\omega_{0}\
\omega_{1}\ldots\in S_\mathrm{wh}$ such that 
$$
\bigcap_{n\geq 0} f_{\omega_{0}} \circ\cdots\circ f_{\omega_{n}}(X)=\{q\}.
$$
Therefore there is $m_{0}$  such that
$$
f_{\omega_{0}} \circ\cdots\circ f_{\omega_{m-1}}(K)\subset
f_{\omega_{0}} \circ\cdots\circ f_{\omega_{m-1}}(X)\subset B_{\frac{\epsilon}{3}}(p^{*})
\quad
\mbox{for every  $m\geq m_{0}$}.
$$
Since $f_{\omega_{0}} \circ\cdots\circ f_{\omega_{m-1}}(K)\subset  \mathcal{B}^{m}(K)$,
 we have 
$$
\label{e.largeenough}
\mathcal{B}^{n_{\ell}}(K)\cap B_{\frac{\epsilon}{3}}(p^{*})\neq \emptyset
\quad
\mbox{for every  $\ell$ big enough.}
$$
Taking now any $z\in \mathcal{B}^{n_{\ell}}(K)\cap B_{\frac{\epsilon}{3}}(p^{*})$ (large $\ell$) and
using 
equation
\eqref{e.byconstruction} we have
$$
d(p^\ast, \widehat{K}) \le d(p^\ast, z) + d(z, \widehat{K}) \le \frac{\epsilon}{3} + \frac{\epsilon}{3} <\epsilon,
$$
 contradicting \eqref{e.acontradicao}. 
This ends the proof of the lemma.
\end{proof}

 \subsection{Proof of Theorem \ref{semifractal}}
 
 The first item of the theorem follows from the next two assertions that we will prove below:

\begin{itemize}
\item[(1)] 
If $K$ is a non-empty compact fixed point of $\mathcal B$, then  $\overline{A_\mathrm{tar}}\subset K$;
 \item [(2)]
 $\overline{A_\mathrm{tar}}$ is a fixed point 
 of the $\mathrm{BH}$-operator $\mathcal{B}$.
\end{itemize} 
 
 To prove the first assertion,
 take any point $p\in A_\mathrm{tar}$. It follows from the definition of $A_{\mathrm{tar}}$, see Remark \ref{EDtarget}, that 
 there is a sequence $\omega$ such that 
 \begin{equation}\label{e.asinequation}
 \{p\}=\bigcap_{n\geq 0} f_{\omega_{0}}\circ\cdots\circ f_{\omega_{n}}(X)
 \supset \bigcap_{n\geq 0} f_{\omega_{0}}\circ\cdots\circ f_{\omega_{n}}(K).
 \end{equation}
 Since the last intersection is non-empty and contained in $K$, it follows that $p\in K$. 
 As this holds for every point $p\in A_\mathrm{tar}$, we have that
 $ A_\mathrm{tar}\subset K$. As $K$ is closed this implies that $\overline{A_\mathrm{tar}}\subset K$.

 It remains 
to prove the second assertion,  $\mathcal B(\overline{A_\mathrm{tar}})=\overline{A_\mathrm{tar}}$. Note that 
for $p$ as in \eqref{e.asinequation} and  every $i=1,\dots,k$ it holds
\begin{equation}
\{f_{i}(p)\}=
\bigcap_{n\geq 0} f_{i}\circ f_{\omega_{0}}\circ\cdots\circ f_{\omega_{n}}(X).
\end{equation}
This implies that $\mathcal{B}(A_{\mathrm{tar}})\subset{A_{\mathrm{tar}}}$.  Hence, by continuity of the maps $f_i$,
$\mathcal{B}(\overline{A_{\mathrm{tar}}})\subset\overline{A_{\mathrm{tar}}}$. By 
definition of the set  $(\overline{A_{\mathrm{tar}}})^*$ in \eqref{e.conjestrela}, it is clear that $(\overline{A_{\mathrm{tar}}})^*\subset \overline{A_{\mathrm{tar}}}$. 
By Lemma~\ref{l.p.existence}, $(\overline{A_{\mathrm{tar}}})^*$ is a fixed point of $\mathcal{B}$. 
Since, by the first assertion, the set $\overline{A_{\mathrm{tar}}}$ is contained 
in every fixed point of $\mathcal{B}$, it follows that $\overline{A_{\mathrm{tar}}}\subset (\overline{A_{\mathrm{tar}}})^*$ and
hence  $\overline A_{\mathrm{tar}}= (\overline{A_{\mathrm{tar}}})^*$ .

To prove the second item of the theorem, consider any non-empty compact subset $K\subset \overline{A_{\mathrm{tar}}}$.
It follows from \eqref{e.asinequation} that $\mathcal{B}(K)\subset  \overline{A_{\mathrm{tar}}}$ and hence 
 $\mathcal{B}^{n}(K)\subset  \overline{A_{\mathrm{tar}}}$ for every $n\geq 0$. In particular, 
 recalling the definition of $h_s$ in \eqref{e.HD}, it follows
 $$
 \lim_{n\to \infty} h_{s}(\mathcal{B}^{n}(K),\overline{A_{\mathrm{tar}}})=0.
 $$ 
 On the other hand, it follows from Lemma \ref{dista} that $
 \lim_{n\to \infty} h_{s}(\overline{A_{\mathrm{tar}}},\mathcal{B}^{n}(K))=0
 $
 and hence 
 $$
 \lim_{n\to \infty} d_{H}(\mathcal{B}^{n}(K),\overline{A_{\mathrm{tar}}})=0.
 $$

\subsection{Proof of Theorem~\ref{mt.local}}
\label{ss.mtlocal}

If there are no strict attractors we are done. Otherwise, assume that there is 
a strict attractor $K$. Since $K$ is a fixed point of $\mathcal{B}$, the first item of Theorem \ref{semifractal}
 implies that
$\overline{A_{\mathrm{tar}}}\subset K$. 
 By definition of a strict attractor, the set  $K$ attracts every compact set 
in a neighbourhood of it, and thus $\overline{A_{\mathrm{tar}}}$
is attracted by $K$. Therefore $\overline{A_{\mathrm{tar}}}=K$, proving the first item. 

Since every strict attractor is a Conley attractor, to prove the second item of the theorem it is enough to see that if  $\overline{A_{\mathrm{tar}}}$ is a 
 Conley attractor then $\overline{A_{\mathrm{tar}}}$ is a strict attractor. 
 Thus, we now assume that $\overline{A_{\mathrm{tar}}}$ is a Conley attractor and hence it has an
  open neighbourhood  $U$  such that 
$\lim_{n\to \infty} \mathcal{B}^{n}(\overline{U})=\overline{A_{\mathrm{tar}}}$.
To prove that
$\overline{A_{\mathrm{tar}}}$ is a strict attractor it is enough  to check that
for every compact set 
$K\in \mathscr{H}(\overline{U})$ it holds
$\lim_{n\to \infty}\mathcal{B}^{n}(K)=\overline{A_{\mathrm{tar}}}$. 
In other words,
we need to see that
for every $\epsilon >0$ there is $n_{0}\in \mathbb{N}$ such that 
for every $n\geq n_{0}$ we have
 \begin{equation}
 \label{e.toproveremains}
 d_{H}(\overline{A_{\mathrm{tar}}},\mathcal{B}^{n}(K))=\max\{ h_{s}(\overline{A_{\mathrm{tar}}},\mathcal{B}^{n}(K))
 ,  h_{s}(\mathcal{B}^{n}(K),\overline{A_{\mathrm{tar}}})\}
 \leq\epsilon.
 \end{equation}

Since $\lim_{n\to \infty}\mathcal{B}^{n}(\overline{U})=
\overline{A_{\mathrm{tar}}}$,  there is $n_{0}$
such that for every  $n\geq n_{0}$
we have
$$
 h_{s}(\mathcal{B}^{n}(\overline{U}),\overline{A_{\mathrm{tar}}})
\leq\epsilon, 
$$
which implies that for every $n\ge n_0$,
$$
 h_{s}(\mathcal{B}^{n}(K),\overline{A_{\mathrm{tar}}})\leq 
 h_{s}(\mathcal{B}^{n}(\overline{U}),\overline{A_{\mathrm{tar}}})
\leq\epsilon.
$$
Hence,  to prove \eqref{e.toproveremains} it remains to see that $
h_{s}(\overline{A_{\mathrm{tar}}},\mathcal{B}^{n}(K))
\le \epsilon $ for
every $n$ sufficiently large. This is exactly the content of Lemma \ref{dista}.

The proof of the theorem is now complete.
\hfil \qed

\begin{schol}\label{sc.atracaofatal2}
If $U$ is a neighbourhood of  $\overline{A_{\mathrm{tar}}}$ such that
$\lim_{n\to \infty}\mathcal{B}^n(\overline{U}) =\overline{A_{\mathrm{tar}}}$, then every compact subset
of $\overline{U}$ also satisfies $\lim_{n\to \infty}\mathcal{B}^n(K) =  \overline{A_{\mathrm{tar}}}$.
\end{schol}

 \subsection{Proof of Theorem~\ref{mt.att}} 
 \label{ss.attractor}
Note that by hypothesis
 the set $A_{\mathrm{tar}}$ is non-empty.
 We need to prove the equivalence of
the  following three assertions: 
\begin{enumerate}
 \item 
$\overline{A_\mathrm{tar}}=X^*$;
 %the set $A_\mathrm{t}$ is dense in $X^*$;
 \item
the $\mathrm{BH}$-operator  has a unique fixed point;
\item
$X^{*}$ is a global attractor
(a strict attractor whose basin is the whole space).
%for every $K\in \mathcal{H}(X)$ it holds $\mathcal{B}^n(K)\to X^*$.
\end{enumerate}

The implication $(1)\Rightarrow (2)$ follows
from the first item of Theorem \ref{semifractal}, which in this case means that
``the minimum fixed point $\overline{A_\mathrm{tar}}$ and
the maximum fixed point $X^*$ are the same''. The converse implication $(1)\Leftarrow (2)$ follows from the
fact that $\overline{A_\mathrm{tar}}$  and $X^*$ are fixed points of $\mathcal{B}$.
Thus we get $(1)\Leftrightarrow (2)$

The implication $(3) \Rightarrow (2)$ is immediate.

To prove 
 $(1)\Rightarrow (3)$ note that, by Claim~\ref{c.l.semnome}, $X^{*}=\lim_{n\to\infty}\mathcal{B}^{n}(X)$ and thus $X^*$ is a Conley attractor. Since, by hypothesis, $\overline{A_{\mathrm{tar}}}=X^{*}$, we 
 have that $\overline{A_{\mathrm{tar}}}$ is a Conley attractor and hence it follows from Scholium ~\ref{sc.atracaofatal2} that $\overline{A_{\mathrm{tar}}}$ is
 a strict attractor whose basin is the whole space.
 \hfil \qed

\subsection{Proof of Theorem~\ref{mt.jogodocaos}} 
\label{ss.mtjogo}

By hypothesis, the set  $\overline{A_{\mathrm{tar}}}$ is a stable fixed point of the 
$\mathrm{BH}$-operator.
Fixed a point $x$ and a sequence $\omega$, consider the nested sequences of sets
$$
Y_\ell (x,\omega)\eqdef \{f_{\omega}^{n}(x)\colon n\geq \ell\}.
%L_\ell (x,\omega) \eqdef  \overline{Y_\ell (x,\omega)}.
$$
 In view of Claim \ref{c.l.semnome}, to prove the theorem it is enough to see 
 that for every disjunctive sequence $\omega$ and every point $x$
it holds
\begin{equation}
\label{e.provadoteorema}
\overline{A_{\mathrm{tar}}}=\bigcap_{\ell \geq 0} \overline{\{f_{\omega}^{n}(x)\colon n\geq \ell\}}=
\bigcap_{\ell \geq 0}  \overline{Y_\ell (x,\omega)}.
\end{equation}
To simplify notation, fixed $\omega$ and $x$ as above, write 
$Y_\ell=Y_\ell (x,\omega)$. 
 
 \medskip
 
% $$
%  \overline{A_{\mathrm{t}}}\subset\bigcap_{\ell \geq 0} \overline{\{x_{n,\xi}\colon n\geq \ell\}} .
% $$ 

\noindent{{\emph{Proof of  the inclusion ``$\subset$''.}} 
Take any point $p\in A_{\mathrm{tar}}$ and fix $\ell\geq 0$. We need to see 
that for every  neighbourhood $V$ of $p$ it holds
\begin{equation}\label{e.inclusion0205}
V \cap Y_\ell\ne \emptyset.
\end{equation}
By definition of $A_{\mathrm{tar}}$, there is an arbitrarily large finite sequence 
$c_{0}\dots c_{r}$ such that 
\begin{equation}\label{e.abril16}
f_{c_{r}}\circ \dots \circ f_{c_{0}}(X)\subset V.
\end{equation}
 We can  assume that $r\geq \ell$. Since
  $\omega$ has dense orbit there is $m_{1}$ such that 
  $$
  \omega_{m_{1}}=c_{0},\, \omega_{m_{1}+1}=c_{1},\,
  \dots,\,\omega_{m_{1}+r}=c_{r}.
 $$ 
   Therefore, from \eqref{e.abril16} it follows 
   $$
  f_{\omega}^{m_{1}+r+1}(x)\in V.
  $$
 Since $m_{1}+r\geq \ell$ we have that
  $ f_{\omega}^{m_{1}+r+1}(x)\in Y_\ell$ and hence
  $V\cap Y_\ell 
 \neq \emptyset$, proving 
 \eqref{e.inclusion0205} and hence the inclusion `$\subset$''.
% assertion.
% It follows that 
% $$
%  \overline{A_{\mathrm{t}}}\subset\bigcap_{\ell \geq 0} \overline{\{x_{n,\xi}\colon n\geq \ell\}} .
% $$ 
 
\medskip

\noindent{{\emph{Proof of  the inclusion ``$\supset$''.}} 
  Take any neighbourhood $V$ of $\overline{A_{\mathrm{tar}}}$. Since 
$\overline{A_{\mathrm{tar}}}$ is stable it has a neighbourhood $V_{0}\subset V$
such that $\mathcal{B}^{n}(V_{0})\subset V$ for every $n\geq 0$.
By definition of $A_{\mathrm{tar}}$, there is a sequence $c_{0}\dots c_{r}$ with
$f_{c_{r}}\circ \dots \circ f_{c_{0}}(X)\subset V_0$. As above, since
  $\omega$ is disjunctive, there is $m_{1}$ with
 $  \omega_{m_{1}}=c_{0},\, \omega_{m_{1}+1}=c_{1},\,
  \dots,\,\omega_{m_{1}+r}=c_{r}.$  
  Taking $n_0=m_1+r$,
this implies that
$f_{\omega}^{n_{0}+1}(x)\in V_{0}$.
From $\mathcal{B}^{n}(V_{0})\subset V$ for every $n\geq 0$ it follows
that
%$\{x_{n,\xi}\colon n\geq n_{0}\}
$Y_{n_0} \subset V$, and thus
 $\overline{Y_{n_0}}\subset \overline{V}$.
 As the sequence of sets 
 $(\overline{Y_\ell})$
  %$\overline{\{x_{n,\xi}\colon n\geq \ell\}}$ 
 is nested,  we have that 
 $
 \bigcap_{\ell \geq 0} \overline{Y_\ell}\subset \overline{V}.
 $
 Since this holds for every neighbourhood $V$ of $\overline{A_{\mathrm{tar}}}$, we conclude that 
  $$
 % \bigcap_{\ell \geq 0} L_\ell=
 \bigcap_{\ell \geq 0} \overline{Y_\ell}
 \subset \overline{A_{\mathrm{tar}}} ,
 $$
 proving the inclusion ``$\supset$''. 
 
 We have proved the equality in \eqref{e.provadoteorema} and hence the theorem.
 \hfil 
\qed

\section{Examples}
\label{s.examples}
In this section, we present some examples that adds a bit more to our
understanding on the target set and also illustrate the hypotheses in the theorems.
The first example, Example \ref{ex.nonregular}, is the most important one of this section
and shows a non-regular IFS whose target
is non-empty.
Before going to the details, let us say some general words about our  constructions and examples. 

Note that requiring the target set 
to be non-empty is equivalent to assume that $S_{\mathrm{wh}}\neq \emptyset$. To construct 
an IFS$(f_{1},\dots,f_{k})$ defined on a compact metric space $X$  with $S_{\mathrm{wh}}\neq \emptyset$, a natural
(and also the simplest) procedure is  asking
 some map to be a contraction, or more generally, asking that for
some $\ell\in\{1, \dots, k\}$, there is $p$ such that
\begin{equation}\label{single}
\bigcap_{i\ge 1} f_{\ell}^{i}(X)=\{p\}.
\end{equation}
However, since the single map $f_{\ell}$ 
has a fixed point (the point $p$) that is a global attractor, such a hypothesis leads to a regular IFS (recall the definition in Section \ref{regularifs}).  We observe that 
this is  precisely the scenario that one finds when studying contracting on average IFSs 
on 
compact metric spaces. 
In Example \ref{ex.nonregular}, we show how to construct IFSs with 
$S_{\mathrm{wh}}\neq \emptyset$ without assuming that one of the maps has a 
fixed point that is a
global attractor 
 as in \eqref{single}. The idea is to use Theorem 4.1 in \cite{DiazMatias} claiming  
that for IFSs on compacts subsets of $\mathbb{R}^{m}$ satisfying a simple topological condition it holds 
$\mathbb{P}(S_{\mathrm{wh}})=1$, for every product measure $\mathbb{P}=\nu^{\mathbb{N}}$, where 
$\nu$ is any probability measure on $\{1,\dots, k\}$ with $\nu(\{i\})>0$ for every $i$. In particular, for the class of IFSs  in  \cite[Theorem 4.1]{DiazMatias} it holds $S_{\mathrm{wh}}\neq \emptyset$. In Example \ref{ex.nonregular} the IFS is defined on $[0,1]$, 
but using the same idea we can also perform examples in higher dimensions.

\medskip

In  Example \ref{example1}, we construct an IFS satisfying the hypothesis in Theorem \ref{mt.jogodocaos} such that  the closure of the target set is not a Conley attractor.

%\medskip
%
%\textcolor{blue}{
%We now discuss Examples~\ref{ex.ultimahora} and \ref{ex.outromais} togheter.
%First recall Remark \ref{EDtarget} stating that for weakly hyperbolic IFSs the target set 
% $A_{\mathrm{tar}}$
%is equal to the maximum fixed point of the $\mathrm{BH}$-operator. These examples show that for general non-weakly hyperbolic sets 
%one needs to consider the closure of
% $A_{\mathrm{tar}}$ to get the maximum fixed point, recall Theorem~\ref{mt.att}.
%Example \ref{ex.outromais} provides a
%``genuinely'' non-hyperbolic IFS 
% whose $\mathrm{BH}$-operator has a global attractor fixed point (equal to the closure of  $A_{\mathrm{tar}}$ by Theorem~\ref{mt.att})
%  which properly contains  $A_{\mathrm{tar}}$. On the other hand,
% Example \ref{ex.ultimahora} shows that there are non-weakly hyperbolic IFSs whose
% global attractor fixed point is   $A_{\mathrm{tar}}$, which by Theorem~\ref{mt.att} is necessarily equal to its closure.}
% \lmargem{texto proposto por lorenzo, segue o texto inicial, mais ou menos} 
% 
% \emargem{no texto proposto, a frase these examples shows that... est\'a mal.
% Pois apenas example 4.4 mostra isso.}

We now discuss Examples~\ref{ex.ultimahora} and \ref{ex.outromais} togheter.
First recall Remark \ref{EDtarget} stating that for weakly hyperbolic IFSs the target set is equal to the maximum fixed point,
i.e., $A_{\mathrm{tar}}=X^*$.
Example \ref{ex.ultimahora} shows that this equality can also happen for 
non-weakly hyperbolic IFSs. This is an example of a ``genuinely'' non-hyperbolic IFS 
 whose BH-operator has a global attractor fixed point,
  recall Theorem \ref{mt.att}.
 However,  Example \ref{ex.outromais} shows that the equality 
$A_{\mathrm{tar}}=X^*$
 is not a necessary condition 
 for the $\mathrm{BH}$-operator having a global attractor fixed point. In fact,
 Example~\ref{ex.outromais} gives an IFS where 
 $\overline{A_{\mathrm{tar}}}$ coincides with the maximum fixed point but 
 $A_{\mathrm{tar}}\subsetneq \overline{A_{\mathrm{tar}}}$. 
 In other words, in Theorem \ref{mt.att} it is indispensable considering the closure on the target set.

 \medskip
 In Examples \ref{ex.outromais} and \ref{ex.ultimahora}, we consider IFSs consisting of two maps on the interval $[0,1]$.
 We observe that to construct IFSs where the target set has the properties in these examples
 is much  more easier if we allow the IFS to have more than 2 maps.
 However, IFSs consisting of two maps  whose  
  target sets have the properties described above can be found
  in the constructions of two well known class of attractors (of a dynamical system), the so-called \emph{bony attractors} 
 and  \emph{porcupine-like horseshoes}, see \cite{Yuri,DiGe}. Hence, we also took the opportunity in this section to describe the target set of the IFSs used in the construction of these  attractors.
 
 \medskip
 
We close this section with the quite simple Example~\ref{ex.without} showing that, in general, IFSs may not have a minimum fixed point.

\begin{example}[A non-regular IFS with $S_{\mathrm{wh}}\neq \emptyset$]
\label{ex.nonregular}
{\emph{
Consider an $\mathrm{IFS}(f_1,f_2)$ such that $f_1$ and $f_2$ are continuous and injective maps defined on $[0,1]$
satisfying:
\begin{itemize}
\item[(1)]
 $f_{1}$ has  exactly
two fixed points $0$ and $1$, where  $0$ is a repeller 
and $1$ is an attractor;
\item[(2)]
$f_2$ 
has exactly three fixed points $p_{1}<p_{2}<p_{3}$,
where $p_1$ and $p_3$ are attractors and $p_2$ is a  repeller;
\item[(3)]
$f_{2}([0,1])=[\alpha,\beta]\subset (0,1)$ and
$p_1< f_1(p_1)< \beta$;
\item[(4)]
there is $c \in (\alpha,p_1)$ such that $f_1(c)=f_2(c)$ and $f_1(x)\ne f_2(x)$ for all $x\ne c$.
\end{itemize}
These conditions are depicted in Figure \ref{f.nonregular}.
We claim that:
\begin{itemize}
\item[(i)]
$\mathrm{IFS}(f_1,f_2)$ is non-regular;
\item[(ii)]
$
S_{\mathrm{wh}}\neq \emptyset$.
\end{itemize}
\begin{figure}[h!]
\centering
\begin{tikzpicture}[xscale=3,yscale=3]
\draw[->] (0,0)-- (1,0);
\draw[->] (0,0)--(0,1);
\draw[-]  (0,0.1)--(0.33,0.25);
\draw[-]    (0.33,0.25)--(0.45,0.52);
\draw[-] (0.45,0.52)--(1,0.7);
\draw[dashed] (0,0)--(1,1);
  \draw[green,smooth,samples=100,domain=0.0:1] plot(\x,{-\x*\x+2*\x});
 \node[scale=0.8,left] at (0.4,0.7) {$f_{1}$};
  \node[scale=0.8,below] at (0.8,0.6) {$f_{2}$};
%\draw[green] (0,0) arc (-3:-60:-1);
\draw[dashed] (0,1)--(1,1);
\draw[dashed] (1,0)--(1,1);
\end{tikzpicture}
\caption{A non-regular IFS with $S_{wh}\neq \emptyset$.}
\label{f.nonregular}
\end{figure}
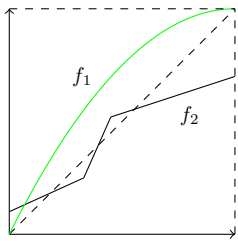}}
{\emph{Recall first the definition of a
regular IFS in Section \ref{regularifs}.
To prove (i), first note that neither $\mathrm{IFS}(f_1)$ or $\mathrm{IFS}(f_2)$ has 
a global attractor.
Thus, it remains to see that
$\mathrm{IFS}(f_{1},f_{2})$ has not a global attractor. We prove this assertion by showing 
that the $\mathrm{BH}$-operator $\mathcal{B}$ associated with
$\mathrm{IFS}(f_1,f_2)$ has at least two different fixed points (which prevents regularity). Indeed, we have that
$[0,1]$ and $[p_{1},1]$ are fixed points of $\mathcal{B}$. It is clear that
$[0,1]$ is a fixed point. To see that  $[p_{1},1]$ is a fixed point, note that 
 $p_1<f_1(p_1)<\beta$, hence 
$$
\mathcal{B}([p_1,1])=
f_2([p_1,1]) \cup  f_1([p_1,1])= [p_1,\beta] \cup [f_1(p_1),1]= [p_1,1].
$$
This implies that 
$\mathrm{IFS}(f_1,f_2)$ is non-regular.}}

\emph{
To prove (ii), note that
$1$ is an attracting fixed point of $f_1$ and $f_{2}([0,1])\subset(0,1)$. Hence we get the \emph{splitting property} $f_{1}^{n}\circ f_{2}([0,1])\cap f_{2}([0,1])=\emptyset$
 for $n$ sufficiently large.
Now  Theorem 4.1 in \cite{DiazMatias}, implies that $S_{\mathrm{wh}}\neq \emptyset$.}

%\emph{
%\noindent{\em{$\bullet$  $\# (A_{\mathrm{tar}})\ge 2$:}}
%Note that  $A_{\mathrm{tar}}$ contains at least one point $x$.
%As $f_i(x) \in  A_{\mathrm{tar}}$ and that $f_1(x)\ne f_2(x)$}.
\end{example}

 \begin{example}[An IFS without 
strict attractors and such that $\overline{A_{\mathrm{tar}}}$ is stable]
\label{example1}{\emph{
Consider maps $f_{1},\, f_2\colon[0,2]\to [0,2]$ 
defined by (see Figure \ref{f.nconley}):
\begin{itemize}
\item[(1)]
$f_{1}(x)=\frac{1}{3}x$;
\item[(2)]
$f_{2}(x)=\frac{1}{3}x+\frac{2}{3}$ for $x\in[0,1]$ and $f_{2}(x)=x$ for 
$x\in [1,2]$.
\end{itemize}
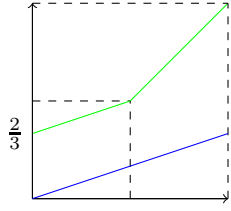
\begin{figure}[!h]
  \centering
  \begin{tikzpicture}[xscale=1.3,yscale=1.3]
      \draw[->] (1,0) -- (2,0);
    \draw[->] (0,1) -- (0,2);
    \draw[green,smooth,samples=100,domain=1.0:2.0] plot(\x,{\x});
     \draw[-] (0,0) -- (1,0);
      \draw[-] (0,0) -- (0,1);
       \draw[dashed] (2,0) -- (2,2);
       \draw[dashed] (0,2) -- (2,2);
        \draw[dashed] (0,1) -- (1,1); 
         \draw[dashed] (1,0) -- (1,1);
       \draw[green,smooth,samples=100,domain=0.0:1.0] plot(\x,{0.33333*\x+0.66666});
     \draw[-] (0,0) -- (2,0);
      \draw[-] (0,0) -- (0,2);
      \draw[blue,smooth,samples=100,domain=0.0:2.0] plot(\x,{0.33333*\x});
       \node[scale=1,left] at (0,0.66666) {$\frac{2}{3}$};
       \end{tikzpicture}
     \caption{The set $\overline{A_{\mathrm{tar}}}$ is Lyapunov stable and is not a Conley attractor}
    \label{f.nconley}
     \end{figure}
  We claim that:
  \begin{itemize}
  \item [(i)]$S_{\mathrm{wh}}\ne \emptyset$;
  \item [(ii)] $\overline{A_{\mathrm{tar}}}$ is not a Conley attractor;
  \item[(iii)] $\overline{A_{\mathrm{tar}}}$ is  stable.
  \end{itemize}
}}
\emph{
Item (i) follows observing that
$f_{1}$ is a contraction}.

{\emph{
To prove (ii),  consider the auxiliary
$\mathrm{IFS}(g_{1},g_{2})$
where $g_{1}=f_{1|[0,1]}$ and $g_{2}=f_{2|[0,1]}$. Let $\mathcal{C}$ be the standard ternary Cantor set in the interval $[0,1]$.
We claim that $\overline{A_{\mathrm{tar}}}=\mathcal{C}$.
Note 
that $\mathcal{C}$ is the Hutchinson attractor of  $\mathrm{IFS}(g_{1},g_{2})$ (see, for instance,
Example 1 in  \cite[Section 3.3]{Hu}). 
In particular, the set 
$\mathcal{C}$ is the unique fixed point of the $\mathrm{BH}$-operator 
$\mathcal{B}$ of  $\mathrm{IFS}(f_{1},f_{2})$ contained in $[0,1]$. Since $[0,1]$
is $\mathcal{B}$-invariant, by Lemma~\ref{l.p.existence} and minimality 
(item (1) in Theorem~\ref{semifractal})  we have 
$\overline{A_{\mathrm{tar}}}\subset [0,1]^* \subset[0,1]$
and therefore $\overline{A_{\mathrm{tar}}}=\mathcal{C}$, proving the claim.
To see $\mathcal{C}$ is not a Conley
attractor, just note that every open neighbourhood of $\mathcal{C}$ necessarily contains an interval 
of the form $[1,\delta)$. Since $f_{2}(x)=x$ for all $x\in [1,\delta)$ the assertion
follows.}}

{\em{
To see (iii), just note that $f_{1},f_{2}$ are non-expanding maps.}}
\end{example}

In the next example, we consider 
 the underlying IFS of the bony attractor in \cite{Yuri}.

\begin{example}[A non-weakly hyperbolic IFS on $\mbox{[0,1]}$ with $A_{\mathrm{tar}}=\mbox{[0,1]}$]
\label{ex.ultimahora}
\emph{Consider the $\mathrm{IFS}(f_{1},f_{2})$
whose maps $f_1$ and $f_2$ are
 defined on $[0,1]$ as follows  (see Figure~\ref{bony}):
 \begin{itemize}
 \item[(1)]
 $f_{1}$ is the piecewise-linear map
with ``vertices'' $(0, 0)$, $(0.6, 0.2)$, and $(1, 0.8)$;
\item[(2)]
$f_{2}$ is the piecewise-linear map with ``vertices''
$(0, 0.15)$, $(0.4, 0.8)$, and $(1, 1)$.
\end{itemize}
As observed in \cite{Yuri}, these maps have the following properties:
\begin{itemize}
\item[(3)]
$f_{1}\circ f_{2}$ has a repelling fixed point;
\item[(4)]
the compositions $f_{1}^3$, $f_{1}^{2}\circ f_{2}$, $f_{2}^{2}\circ f_{1},$ and $f_{2}^{5}$ are uniform contractions
and the union of their images is $[0,1]$.
\end{itemize}
We claim that:
\begin{itemize}
\item[(i)]The
$\mathrm{IFS}(f_{1},f_{2})$ is non-weakly hyperbolic;
\item[(ii)]$A_{\mathrm{tar}}=[0,1]$.
\end{itemize}  
}

\begin{figure}[h!]
\centering
\begin{tikzpicture}[xscale=3,yscale=3]
 \node[scale=0.6,left] at (0.3,0.75) {$f_{2}$};
  \node[scale=0.6,below] at (0.7,0.6) {$f_{1}$};
\draw[->] (0,0)-- (1,0);
\draw[->] (0,0)--(0,1);
\draw[red,-]  (0,0)--(0.6,0.2);
\draw[red,-]  (0.6,0.2)--(1,0.75);
\draw[blue,-]  (0,0.15)--(0.4,0.8);
\draw[blue,-]  (0.4,0.8)--(1,1);
\draw[dashed] (0,1)--(1,1);
\draw[dashed] (0,0)--(1,1);
\draw[dashed] (1,0)--(1,1);
\end{tikzpicture}
\caption{The underlying IFS of a bony attractor}
\label{bony}
\end{figure}
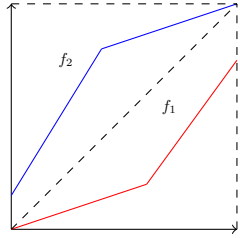

%\emph{
%We recall some facts from \cite{Yuri}:
%\begin{enumerate}
%\item[(i)] $f_{1}\circ f_{2}$ has a repelling fixed point. Thus the periodic sequence $\overline{12}\not\in S_{\mathrm{wh}}$ and
%hence the IFS is not weakly hyperbolic.
%\item[(ii)]
%The compositions $f_{1}^3$, $f_{1}^{2}\circ f_{2}$, $f_{2}^{2}\circ f_{1},$ and $f_{2}^{5}$ are uniform contractions
%and that the union of their images is $[0,1]$. Thus 
% $\mathrm{IFS}(f_{1}^3, f_{1}^{2}\circ f_{2}, f_{2}^{2}\circ f_{1},f_{2}^{5})$ is hyperbolic 
%and $[0,1]$ is the unique fixed point of its Barnsley-Hutchinson operator.
%\end{enumerate}}
\emph{
The fact that $f_{1}\circ f_{2}$ has  a repelling fixed 
point implies (i): the  periodic sequence $\overline{12}$ does not belong to $S_{\mathrm{wh}}$.}

{\em{
To prove (ii),
consider the finite set of words corresponding to the compositions in item $(4)$
$$
W=\{111,112,221, 22222\}
$$ 
and 
 let $E_{W}$ be the subset of $\Sigma_{2}$ consisting 
 of sequences $\omega$ that are a concatenation 
 of words of $W$.
It follows from the classical Hutchinson theory for hyperbolic IFSs that $E_{W}\subset S_{\mathrm{wh}}$ and 
$\pi(E_{W})=[0,1]$, where $\pi$ is the coding map (this follows from condition(4)).
Thus $A_{\mathrm{tar}}=\pi(S_{\mathrm{wh}})=[0,1]$.}}
\end{example}

In the next example we consider 
 the underlying IFS of the porcupine-like
horseshoes in \cite{DiGe} and translate the construction in \cite[page 12]{DiGeRams} to
our context.

\begin{example}[{$A_{\mathrm{tar}}\subsetneq \overline{A_{\mathrm{tar}}}$ and $\overline{A_{\mathrm{tar}}}$ is the whole space}]
\label{ex.outromais}
{\emph{Consider the $\mathrm{IFS}(f_{1},f_{2})$
whose maps $f_1$ and $f_2$ are
 defined on $[0,1]$ as follows (see Figure~\ref{Porcupine}):
\begin{itemize}
\item[(1)]
$f_{1}(x)=\lambda \, (1-x)$, $\lambda\in (0,1)$;
\item[(2)]
$f_{2}$ is an injective continuous map with exactly two fixed points,
the repelling fixed point $0$
 and the attracting 
fixed point $1$;
\item[(3)]
$f_{2}$ is a uniform contraction 
 on $[f_{2}^{-1}(\lambda),1]$. 
 \end{itemize}
 We claim that:
 \begin{itemize}
 \item[(i)]
 $\overline{A_{\mathrm{tar}}}=[0,1]$; 
 \item[(ii)]
 $A_{\mathrm{tar}}\subsetneq \overline{A_{\mathrm{tar}}}$.
 \end{itemize}
 }}
 
{\emph{To prove (i), we start by observing that $\lambda \in \overline{A_{\mathrm{tar}}}$.  To see why this is so,
 take any open neighbourhood $V\subset (0,1)$ of $\lambda$ and note that $f_{1}^{-1}(V)$ is a neighbourhood 
 of $0$. 
 Consider the fixed point
  $p=\frac{\lambda}{1+\lambda}\in (0,1)$ 
 of $f_{1}$ and  note $\bar 1\in S_{\mathrm{wh}}$ and hence $p\in A_{\mathrm{tar}}$. 
 Since $f_{2}^{n}(p)\to 1$ as $n\to \infty$ and $f_{1}(1)=0$, there is $\ell$ such that 
 $f_{1}\circ f_{2}^{\ell}(p)\in f_{1}^{-1}(V)$. Hence $f_{1}^{2}\circ f_{2}^{\ell}(p)\in V$.
 The invariance of $A_{\mathrm{tar}}$ implies  that $A_{\mathrm{tar}}\cap V\neq \emptyset$.
 Since this holds for every neighbourhood  $V$ of $\lambda$ we get $\lambda \in \overline{A_{\mathrm{tar}}}$.
 }}

{\emph{
 We now  prove that $A_{\mathrm{tar}}$ is dense in $[0,1]$:  for every open interval $J\subset (0,1)$
 it holds $J\cap A_{\mathrm{tar}}\neq \emptyset$. If $\lambda\in J$ we are done. Otherwise $\lambda\notin J$
 and
 either $J\subset (\lambda,1]\eqdef I_{2}$ or $J\subset [0,\lambda)\eqdef I_{1}$. 
We claim that 
\begin{equation}
\label{e.forsome}
\lambda \in f_{\omega_{m}}^{-1}\circ\dots\circ f_{\omega_{0}}^{-1}(J),
\quad
\mbox{for some $\omega_{0}\dots \omega_{m}$}.
\end{equation}
Since $\lambda \in \overline{A_{\mathrm{tar}}}\cap f_{\omega_{m}}^{-1}\circ\dots\circ f_{\omega_{0}}^{-1}(J)$,
the invariance of $A_{\mathrm{tar}}$ implies that $J\cap A_{\mathrm{tar}}\neq \emptyset$.
Thus it remains to prove \eqref{e.forsome}.
For that let $\omega_{0}=i$ if $J\subset I_{i}$ and define recursively 
$\omega_{\ell+1}=i$ if $f_{\omega_{\ell}}^{-1}\circ\dots\circ f_{\omega_{0}}^{-1}(J)\subset I_{i}$.
Note that if at some step
$f_{\omega_{\ell}}^{-1}\circ\dots\circ f_{\omega_{0}}^{-1}(J)$ is not contained neither in $I_1$ nor in $I_2$ we have that
 $\lambda \in f_{\omega_{\ell}}^{-1}\circ\dots\circ f_{\omega_{0}}^{-1}(J)$ 
and  we are done.
As $f_{2}^{-1}$ is a uniform expansion on $I_2$ and $f_{1}^{-1}$ is a uniform 
expansion on $I_1$, the size of the intervals 
$f_{\omega_{\ell}}^{-1}\circ\dots\circ f_{\omega_{0}}^{-1}(J)$
increases exponentially and 
the recursion stops after a finitely many steps:
there is $m$ such that $\lambda\in f_{\omega_{m}}^{-1}\circ\dots\circ f_{\omega_{0}}^{-1}(J)$,
proving \eqref{e.forsome}.
}}

{\emph{To prove (ii), it is enough to see that $1\notin A_{\mathrm{tar}}$. This follows noting that 
$\bar 2 \not\in S_{\mathrm{wh}}$ and that
$1\notin f_{\omega_{0}}\circ\dots\circ f_{\omega_{n}}([0,1])$ for
any finite 
sequence $\omega_{0}\dots \omega_{n}$ such that $\omega_{i}=1$ for some $i$. } }
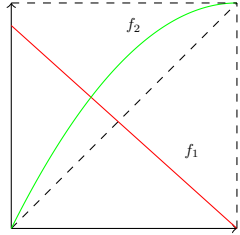
\begin{figure}[h!]
\centering
\begin{tikzpicture}[xscale=3,yscale=3]
\draw[->] (0,0)-- (1,0);
\draw[->] (0,0)--(0,1);
\draw[red,-] (0,0.9)--(1,0);
\draw[dashed] (0,0)--(1,1);
  \draw[green,smooth,samples=100,domain=0.0:1] plot(\x,{-\x*\x+2*\x});
 \node[scale=0.6,left] at (0.6,0.9) {$f_{2}$};
  \node[scale=0.6,below] at (0.8,0.4) {$f_{1}$};
%\draw[green] (0,0) arc (-3:-60:-1);
\draw[dashed] (0,1)--(1,1);
\draw[dashed] (1,0)--(1,1);
\end{tikzpicture}
\caption{The underlying IFS of a porcupine-like horseshoe}
\label{Porcupine}
\end{figure}
\end{example}

\begin{example}[An IFS without a minimum fixed point]
\label{ex.without}{\emph{
Consider
the $\mathrm{IFS}(f_{1},f_{2})$ where  $f_1,f_2\colon [0,1]\to [0,1]$ are defined by $f_{1}(x)=x$
and $f_{2}(x)=1-x$. For each $x\in [0,1]$, the set $\{x,1-x\}$ is a fixed 
point of $\mathcal{B}$ which is also minimal.
Obviously,  there is no  
fixed point contained in all fixed points and hence there is no a minimum fixed point.
Note that, in this example,  we have $S_{\mathrm{wh}}= \emptyset$.
}}
\end{example}
% 
%\begin{figure}[h!]
%\centering
%\begin{tikzpicture}[xscale=2,yscale=2]
%\draw[->] (0,0)-- (1,0);
%\draw[->] (0,0)--(0,1);
%\draw[-]  (0,0.25)--(0.33,0.33);
%\draw[-]   (0.33,0.33)--(0.5,0.5);
%\draw[-]    (0.5,0.5)--(1,0.55);
%\draw[dashed] (0.7,0)--(0.7,0.7);
%\draw[dashed]    (0,0.7)--(0.7,0.7);
%\draw[dashed] (0,0)--(1,1);
%  \draw[green,smooth,samples=100,domain=0.0:0.7] plot(\x,{-\x*\x+(1.7)*\x});
% \draw[green,smooth,samples=100,domain=0.7:1] plot(\x,{\x*\x*\x*\x-(1.49)*\x*\x +\x+0.49});
% 
%%\draw[green] (0,0) arc (-3:-60:-1);
%\draw[dashed] (0,1)--(1,1);
%
%\draw[dashed] (1,0)--(1,1);
%%\draw[blue,-] (1.8,0.5)--(2.8,0);
%%\draw[green] (1.8,0) arc (-3:-90:-1);
% %\node[scale=0.8,below] at (2.3,0.25) {$f_{1}$};
% %\node[scale=0.8,below] at (2.3,0.81) {$f_{2}$};
%\end{tikzpicture}
%\caption{non-regular ifs with a weakly hyperbolic sequence.}
%\end{figure}
%

%
%\begin{example}{Example of a non-regular ifs with $S_{\mathrm{t}}\neq \emptyset$
%on $\mathbb{R}^{n}$}.
%\end{example}
%
%Example of Kudryashov satisfies $A_{\mathrm{t}}=[0,1]$,
% none of the maps is a contraction but the systems is asymptotically 
% stable.
% 
% \begin{example}{Bony attractor}
%bla
%\end{example}
%
%
% \begin{example}{Porcupine}
%bla
%\end{example}
%
%
% \begin{example}{A non-separable ifs with $\mathfrak{b}(S_{\mathrm{t}})=0$}
%bla
%\end{example}

%
%\bibliographystyle{siam}
%\bibliography{references}

%

%\bibitem{Ar} \textsc{L. Arnold} ,
%\textit{Random dynamical systems}, Springer Verlag, 1998.

%\input{ifs-introd}

%%%%%%%%%%%%%%%%%%%%%%%%%%%%%%%%%
%%%%%%%%%%%%%%%%%%%%%%%%%%%%%%%%%

\bibliographystyle{acm}
\bibliography{references}

\end{document}